\newcommand{\no}[1]{#1}
\renewcommand{\no}[1]{}  \newcommand{\upDelta}{\Delta} 
\renewcommand{\Delta}{\upDelta}
\newtheorem{theorem}{Theorem}
\newtheorem{proposition}{Proposition}
\newtheorem{lemma}{Lemma}
\newtheorem{definition}{Definition}
\newtheorem{corollary}{Corollary}
\theoremstyle{remark}
\newtheorem{example}{Example}
\theoremstyle{remark}
\newtheorem{remark}{Remark}
\DeclareMathOperator{\supp}{supp}
\DeclareMathOperator{\Ker}{Ker}
\DeclareMathOperator{\Coker}{Coker}
\DeclareMathOperator{\Ran}{Ran}
\newcommand{\eps}{\varepsilon}
\newcommand{\R}{{\bf R}}
\newcommand{\Id}{\mbox{Id}}
\newcommand{\A}{\mathcal{A}}
\renewcommand{\H}{\mathcal{H}}
\newcommand{\B}{\mathcal{B}}
\renewcommand{\r}[1]{(\ref{#1})}
\newcommand{\PDO}{$\Psi$DO}
\newcommand{\be}[1]{\begin{equation}\label{#1}}
\newcommand{\ee}{\end{equation}}
\renewcommand{\d}{\mathrm{d}}
\renewcommand{\i}{\mathrm{i}}
\newcommand{\bo}{\partial M}
\newcommand{\Mint}{M^\text{\rm int}}
\title[Linearizing non-linear inverse problems]{Linearizing non-linear inverse problems\\ and an application to inverse backscattering}
\author[P. Stefanov]{Plamen Stefanov}
\address{Department of Mathematics, Purdue University, West Lafayette, IN 47907}
\thanks{First author partly supported by NSF Grant DMS-0800428}
\author[G. Uhlmann]{Gunther Uhlmann}
\address{Department of Mathematics, University of Washington, Seattle, WA 98195}
\thanks{Second author partly supported by NSF and a Walker Family Endowed Professorship}
\begin{document}

\begin{abstract} 
We propose an abstract approach to prove local uniqueness and conditional H\"older stability to non-linear inverse problems by linearization. The main condition is that, in addition to the injectivity of the linearization $A$, we need a stability estimate for $A$ as well. That condition is satisfied in particular, if $A^*A$ is an elliptic pseudo-differential operator. We apply this scheme to show  uniqueness and H\"older stability for the inverse backscattering problem for the acoustic equation near a constant sound speed.
\end{abstract}

\maketitle

\section{Introduction} 
Non-linear inverse problems are often linearized, and reduced to the problem of the injectivity of their linearization. If that linearization is an injective map with a closed range, then this implies local uniqueness and Lipshitz stability, see Theorem~\ref{thm_li} below. Most inverse problems in Mathematical Physics however are ill-posed and do not fall in this category. The mere fact that the linearization is injective, if it is, is not enough for local injectivity of the original problem when the closed range condition fails. The closed range condition is also equivalent to a linear stability estimate, see \r{13} below. 
The main purpose of this work is to propose a systematic approach to treat inverse problems for which the closed range condition may still hold if we replace the original spaces  with new ones satisfying  interpolation estimates, see \r{13}. A typical example is to replace $C^k$ or $H^s$ by $C^{k'}$, respectively $H^{s'}$, with different $k'$, $s'$. A sufficient condition for the linear stability estimate, besides the injectivity of the linearization $A$, is $A^*A$ to be an elliptic pseudo-differential operator (\PDO). The later is a standard consequence of the theory of elliptic \PDO s and elliptic \PDO s. Our main result in this direction is Theorem~\ref{thm1} that states, roughly speaking, that linearization plus an appropriate  stability estimate of the linearized map imply local injectivity and a conditional H\"older stability for the non-linear map. 

We recently used this approach for studying  boundary rigidity/lens rigidity questions for compact Riemannian manifolds with boundary \cite{SU-Duke, SU-JAMS, SU-lens}. 
Some of these ideas have been used before,  on a case by case basis,   for proving local uniqueness with or without a H\"older stability, see  for example \cite{Eskin,S-CPDE,SU-IBSP}.  
Our goal is to systematize this approach, and in particular to understand what makes (at least a large enough class of) ill-posed inverse problems ill posed, and in what cases one can get local uniqueness by linearization. In particular, we show why many severely ill-posed inverse problems cannot be treated this way --- the linearization is unstable in any pair of Sobolev spaces. The most famous example of this type is  Calder\'on's problem \cite{Calderon, SylvesterU87, U}.  

We give an application to the inverse backscattering problem for the acoustic equation. In this problem, we are trying to determine the sound speed or, equivalently, the index of refraction, of a medium by measuring the scattering amplitude with the direction of incidence of a plane wave opposite  to the direction of the reflected wave. In other words, we are measuring the echos produced by plane waves. This problem arises for instance in ultrasound tomography. We prove injectivity near constant sound speeds and we give a H\"older type conditional stability estimate, see Theorem~\ref{thm_e1}.  Conditional Lipshitz stability estimates in different norms were given in \cite{Wang98} based on the local uniqueness proof in \cite{SU-IBSP}.

In section~\ref{sec_IP} we prove the main result --- Theorem~\ref{thm1}. In section~\ref{sec_2}, we give sufficient conditions for the stability of the linearized inverse problem required by Theorem~\ref{thm1}. Those conditions are based on standard elliptic \PDO\ theory and Fredholm theory. 
Finally, in section~\ref{sec_4}, we give the application to the inverse backscattering problem.

\section{An abstract inverse problem}  \label{sec_IP}
We describe our abstract inverse problem now. 
Let $\A: \B_1\to \B_2$ be a continuous non-linear map between two Banach spaces. Consider the ``inverse problem'': 
\be{11}
\text{Given $h\in \mbox{Ran}(\A)$, find $f$ so that $\A(f)=h$.}
\ee
Here, we study only the (local) uniqueness and stability questions for \r{11}.

\begin{definition}  \label{def1}
We say that there is a weak local uniqueness for \r{11} near $f_0\in \B_1$, if  there exists a neighborhood of $f_0$ so that for any $f$ in that neighborhood that solves 
\be{13a}
\A(f)=\A(f_0),
\ee
we have $f=f_0$.
\end{definition}

\begin{definition}  \label{def2}
We say that there is a strong local uniqueness for \r{11} near $f_0\in \B_2$, if  there exists a neighborhood of $f_0$ so that for any $f_1$, $f_2$ in that neighborhood with
\be{14}
\A(f_1)=\A(f_2),
\ee
we have $f_1=f_2$.
\end{definition}

Besides uniqueness, we are often interested in stability estimates. By a classical argument, if $\A: U \to V$ is injective, where $U$ and $V$ are open, and $K\subset U$ is compact, 
then $\A^{-1}$, restricted to $\A(K)$, is continuous. The compactness assumption is one of the ways ill-posedness of many (but not all) inverse problems manifests itself. The continuity of $\A^{-1}$ us viewed as stability, and such kind of results are called conditional stability. One of the central problems in Inverse Problems is to estimate the modulus of continuity of $\A^{-1}$ (restricted appropriately). In other words, we want to find a function $\phi(t)$, $t\to 0$, so that $\phi\to0$, as $t\to0$, and 
\[
\|f_1-f_2\|_{\B_1}   \le \phi\left(\|\A(f_1) -\A(f_2)\|_{\B_2}\right)
\]
for all $f_1$, $f_2$ in some neighborhood of a fixed $f_0$, and possibly also, restricted to a compact subset $K$ of $\B_1$ (hence the stability is conditional). The estimate is of H\"older type, if one can choose $\phi(t) = Ct^\alpha$, $0<\alpha\le1$, and of Lipschitz type, if $\phi(t) = Ct$. We view in this paper inverse problems that allow H\"older estimates as ``stable'' (Lipschitz stability occurs rarely); while any other are viewed as ``unstable''. Recall that for the EIT problem, $\phi(t) = C\left(\log(1/t)\right)^{-\mu}$, $\mu>0$, that tends to $0$, as $t\to0$, much slower than $t^\alpha$. 

The choice of the compact set $K$ also matters. If it is ``too small'', the conditional stability is trivial and not interesting. For example, if $K$ is finite dimensional, and $\A$ is smooth enough, one always has a Lipshitz stability estimate under the assumption that $\A$ has an injective differential, see below. If $\B_1$ is $H^s(M)$ or $C^k(M)$ with a compact set (or manifold) $M$, we are interested whether one can choose $K$ to be $H^{s_1}(M)$ or $C^{k_1}(M)$ with $s_1>s$, or $k_1>k$. A subspace $K$  consisting of real analytic functions only, for example, will be considered ``too small''.

\subsection{``Inverse Problems'' in $\R^n$} 
We start with the trivial case when $\B_1=\R^n$, $\B_2=\R^m$. Then the problem is reduced to solving \r{11} with $\A :\R^n\to\R^m$. Assume that $\A\in C^2$ ($\A\in C^1$  actually suffices, see Theorem~\ref{thm_li} below). Let $A_{x_0}$ be the differential of $\A$ at $x=x_0$. Then
\be{1a1}
\A(x) = \A(x_0) + A_{x_0}(x-x_0) + R_{x_0}(x)  \quad \mbox{with}\;  |R_{x_0}(x)| \le C_{x_0}|x-x_0|^2,
\ee
for $x$ near $x_0$. Assume now that $A_{x_0}$ is injective (which can happen only when $m\ge n$). This immediately implies the estimate
\be{1a2}
|h| \le   C_0|A_{x_0}h|, \quad \forall h\in \R^n.
\ee
The easiest way to prove \r{1a2} is to note that one can choose $1/C_0$ to be the minimum of $|Ah|$ for $h$ on the (compact) unit sphere. 
Then the local uniqueness question about the number of possible solutions of $\A(x) = \A(x_0)$ near $x=x_0$ can be answered as follows. We have
\[
\begin{split}
|x-x_0|/C_0  \le 
|A_{x_0}(x-x_0)| &\le  |\A(x) - \A(x_0)|+ |R_{x_0}(x)| \\
&\le  |\A(x) - \A(x_0)| + C_{x_0} |x-x_0|^2.
\end{split}
\]
Therefore, if $|x-x_0|\le (2C_0C_{x_0})^{-1}$, one gets
\be{1a3}
|x-x_0|\le 2C_0|\A(x) - \A(x_0)|.
\ee
We therefore have local uniqueness and a Lipshitz stability estimate. Moreover, one can choose $C_{x_0}$ to be locally independent of $x_0$, if $\A\in C^2$, and the same applies to $C_0$. Thus one gets that there exists a neighborhood $U$ of $x_0$ in $\R^n$, so that for any $x_1\in U$, $x_2\in U$, the equality $\A(x_1) = \A(x_2)$ implies $x_1=x_2$. Moreover, similarly to \r{1a3}, one has
\be{1a4}
|x_1-x_2|\le 2C_0|\A(x_1) - \A(x_2)|, \quad x_1\in U, \; x_2\in U.
\ee

Note that $\B_2$ can be infinite dimensional here, and the arguments do not change. In particular, if $\A\in C^{1,\mu}$, $\mu>0$, and $A_{x_0}$ is injective, then one always has local uniqueness and the  stability estimate \r{1a4} of Lipshitz type. If $\B_1$ is infinite dimensional, then $\A|_K$ has that property for any finite dimensional $K$.

\subsection{A well posed inverse problem. The local injectivity theorem in Banach spaces. } We return to the case where $\B_{1,2}$ are finitely dimensional Banach spaces. 
Consider now a truly multidimensional version of our abstract inverse problem under assumptions that make it well-posed, and therefore, not typical. The main assumption is that $A_{x_0} :\B_1\to A_{x_0}(\B_1)$ is an isomorphism, i.e., that  the differential $A_{x_0}$ is not only injective but also has a closed range. Then we have the following theorem that is closely related to the inverse function theorem and the implicit function theorems in Banach spaces ($A_{x_0}$ is assumed to be an isomorphism from $\B_1$ to $\B_2$ then). 

\begin{theorem}[Local Injectivity Theorem, \cite{AbrahamMR}]  \label{thm_li} 
Let $\A : U\subset \B_1\to B_2$ be $C^1$, with $U\ni f_0$ open, let $A_{f_0}$ be injective and let it have a closed range. Then there exists a neighborhood $V\subset U$ of $f_0$, on which $\A$ is injective. Moreover, the inverse $\A^{-1}: \A(V)\to U$ is Lipshitz continuous. 
\end{theorem}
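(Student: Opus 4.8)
\emph{Proof proposal.} The plan is to reduce the statement to a coercivity estimate for the linearization $A_{f_0}$ and then absorb the nonlinear remainder using the $C^1$ hypothesis, mirroring the finite-dimensional computation that led to \r{1a3}--\r{1a4}. First I would note that, since $A_{f_0}$ is injective and its range is closed, $\Ran(A_{f_0})$ is itself a Banach space and $A_{f_0}\colon \B_1\to\Ran(A_{f_0})$ is a continuous linear bijection; by the bounded inverse (open mapping) theorem its inverse is bounded, so there is a constant $c>0$ with
\[
\|A_{f_0}h\|_{\B_2}\ge c\|h\|_{\B_1},\qquad\forall h\in\B_1,
\]
the exact analogue of \r{1a2}. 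This is the only place the closed-range hypothesis enters.

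Next I would control the remainder $\A(f_1)-\A(f_2)-A_{f_0}(f_1-f_2)$. Take $V=\{f:\|f-f_0\|_{\B_1}<\delta\}\subset U$, an open ball, hence convex. Since $\A\in C^1$ and $g\mapsto A_g$ is continuous at $f_0$, I can shrink $\delta$ so that $\|A_g-A_{f_0}\|\le c/2$ for all $g\in V$. For $f_1,f_2\in V$ the segment $g_t=f_2+t(f_1-f_2)$, $t\in[0,1]$, stays in $V$, and the fundamental theorem of calculus in Banach spaces gives
\[
\A(f_1)-\A(f_2)-A_{f_0}(f_1-f_2)=\int_0^1\bigl(A_{g_t}-A_{f_0}\bigr)(f_1-f_2)\,\d t,
\]
whence $\|\A(f_1)-\A(f_2)-A_{f_0}(f_1-f_2)\|_{\B_2}\le\tfrac{c}{2}\|f_1-f_2\|_{\B_1}$.

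Combining the two bounds by the triangle inequality,
\[
\|\A(f_1)-\A(f_2)\|_{\B_2}\ge\|A_{f_0}(f_1-f_2)\|_{\B_2}-\tfrac{c}{2}\|f_1-f_2\|_{\B_1}\ge\tfrac{c}{2}\|f_1-f_2\|_{\B_1},
\]
which shows $\A$ is injective on $V$ and that $\A^{-1}\colon\A(V)\to U$ is Lipschitz with constant $2/c$.

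I do not expect a genuine obstacle here: the argument is essentially the $\R^n$ computation of the excerpt, the only new ingredient being the implication ``injective $+$ closed range $\Rightarrow$ coercivity,'' which is exactly the bounded inverse theorem. The one point that needs a little care is taking $V$ convex, so that the mean value inequality can be applied along segments; choosing $V$ to be a ball takes care of that. Alternatively one may simply invoke \cite{AbrahamMR}.
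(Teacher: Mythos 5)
Your proof is correct and complete: the coercivity estimate from ``injective $+$ closed range'' via the open mapping theorem, combined with the mean value inequality on a convex ball where $\|A_g-A_{f_0}\|\le c/2$, does give injectivity on $V$ and a Lipschitz inverse with constant $2/c$. The paper itself offers no proof of Theorem~\ref{thm_li} --- it simply cites \cite{AbrahamMR} --- so there is nothing to diverge from; your argument is the standard one behind that citation and is exactly the Banach-space extension of the finite-dimensional computation \r{1a1}--\r{1a4}, with the quadratic remainder bound replaced by the small-Lipschitz remainder bound that the $C^1$ (rather than $C^{1,1}$) hypothesis provides.
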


This theorem shows that we have the same conclusions as in the finitely dimensional case, with a Lipshitz stability of non-conditional type. 

We refer to \cite{AbrahamMR}] for the definition of the differential $A_{f_0}$ of $\A$ at a fixed $f_0$ and  for a definition of  (a Fr\'echet) differentiable map. In particular, if the G\^{a}teaux derivative (the directional derivative in all directions) exists in some open set and it is continuous there, then $\A$ is Fr\'echet differentiable as well and the two derivatives coincide. Then $\A$ is said to be $C^1$. 

\subsection{An abstract ill-posed inverse problem} We turn out attention now to an abstract  inverse problem that may not be well posed.  
In order to study the inverse problem \r{11} by linearization, we assume that the differential $A_{f_0}$ of $\A$ exists at $f=f_0$ with some $f_0$. Actually, we will assume something slightly stronger --- a quadratic estimate on the remainder, that roughly speaking means that $\A$ is $C^{1,1}$. Namely, we assume that 
\be{12}
\A(f) = \A(f_0) + A_{f_0}(f-f_0) + R_{f_0}(f), \quad \mbox{with}\;  \|R_{f_0}(f)\|_{\B_2}\le C_{f_0}\|f-f_0\|_{\B_1}^2,
\ee
for $f$ in some neighborhood in $\B_1$ of some $f_0\in \B_1$.

It may happen that we have the weaker estimate
\be{12w}
\|R_{f_0}(f)\|_{\tilde\B_2}\le C_{f_0}\|f-f_0\|_{\tilde\B_1}^2
\ee
for $f$ in some neighborhood in $\tilde\B_1$ of some $f_0\in \tilde\B_1$, where $\tilde\B_1\subset \B_1$, $\tilde \B_2\supset \B_2$. 
Here and below, $\tilde\B_1\subset \B_1$, $\tilde \B_2\supset \B_2$  mean also that 
\be{12a}
\|\cdot\|_{\B_1}\le C\|\cdot\|_{\tilde\B_1},\quad \|\cdot\|_{\tilde\B_2}\le C\|\cdot\|_{\B_2}.
\ee
In such case, we can simply replace $\B_2$ by $\tilde\B_2$; and $\B_1$ by $\tilde \B_1$. In the boundary rigidity problem \cite{SU-JAMS}, actually $\tilde \B_1=C^1(M)\supset \B_1=C^2(M)$, i.e., \r{12} is even stronger. Keeping  $\tilde\B_1$, $\tilde\B_2$ and assuming appropriate interpolation estimates would improve a bit the exponent $\mu_1\mu_2$ in the H\"older exponent in Theorem~\ref{thm1} below but for the sake of simplicity, we will not pursue this. In other words, we have some freedom how to chose our spaces $\B_{1,2}$, and we choose them in a way that makes \r{12} true. This choice might not be the optimal for the analysis of $A_{f_0}$ however.

Assume now that $A_{f_0}$ is injective. This does not imply automatically a stability estimate of the type \r{1a2}, of course. Assume in addition, that such an estimate holds in some norms, i.e.,  
\be{13}
\|h\|_{\B_1'}\le C\|A_{f_0}h\|_{\B_2'}, \quad \forall h\in \B_1,
\ee
with some Banach spaces $\B_2'\subset \B_2$, $\B_1'\supset\B_1$.  In applications, we want not only such $\B_1'$, $\B_2'$ to exist (they always do, one can set $\B_1'=\B_1$,  $\|A_{f_0}h\|_{\B_2'} = \|h \|_{\B_1}$ on $\Ran A_{f_0}$) but we also want they to be ``reasonable'' spaces independent of $f_0$, typically some  $H^s$ or $C^k$ spaces. 
Then, if $\B_1=\B_1'$, $\B_2=\B_2'$ one can apply Theorem~\ref{thm_li} to 
  prove local injectivity and a Lipshitz stability estimate of the type \r{1a4}. The inverse problem is well-posed then. 

If the pairs of spaces in \r{12} and \r{13} cannot be chosen to be the same, and often that is the case, one can still prove local uniqueness and a stability estimate but of a H\"older and conditional type, if certain interpolation estimates are satisfied, as we show below. Then the inverse problem is ill-posed but we think of it as mildly ill-posed. 

Before formulating this as a formal statement, we will give an example where \r{13} does not hold (in any Sobolev spaces), and there is no local uniqueness.

\begin{example}   \label{ex1}
Let $l^2$ be the Hilbert space of sequences $x=\{x_k\}_{k=1}^\infty$ with norm $\|x\|^2=\sum |x_k|^2$. The Sobolev spaces $h^s$ are defined through the norms $\|x\|^2_{h^s} = \sum k^{2s}|x_k|^2$. We think of the components of $x$ as the Fourier coefficients of a $2\pi$-periodic function, which explains why we call $h^s$ a Sobolev space. 
Define the non-linear map $\A$ by
\be{15}
\A (x) = Ex - (x,a)x,
\ee
where 
\[
a = \{1/k\}, \quad E = \textrm{diag}\{e^{-k}\}.
\]
Clearly, $\A(0)=0$, the linearization of $\A$ near $x=0$ is $A_0= E$, and the latter is an injective map. Moreover, there is an inverse $E^{-1}$ with a dense domain, but $E^{-1}$ is unbounded as an operator from any Sobolev space to any other one, thus \r{13} does not hold in such spaces. 
Since $[\A (x)]_k = e^{-k}x_k - x_k\sum x_m/m$, we get that
\[
x^{(k)} = (0,\dots ke^{-k},0,\dots),
\] 
where the non-zero entry is the $k$-th one, is a solution to $\A(x)=0$. Therefore, in any neighborhood of $0$ in $l^2$ there are infinitely many solutions of $\A(x)=0$ despite the fact that the linearization $A_0$ is injective. Moreover, for any $s$, in any neighborhood of $0$ in $h^s$, there are still infinitely many solutions. Therefore, there is no local uniqueness in this case, neither weak nor strong, in any Sobolev space. One can still get local uniqueness for $\|x\|_*\ll1$ by choosing an  $h^\infty$ type of norm $\|\cdot\|_*$ with an exponential weight, namely $\|x\|_* = \|E^{-1}x\|$. This however translates into a neighborhood of the origin of $2\pi$-periodic functions that involves certain real analytic functions only. We think of such topology as ``unreasonably restrictive''. 

\medskip
Based on that example, one can consider  the following map in $L^2(\R^n)$
\[
\A(f) = \phi*f - (f,a)f.
\]
Choose  $\hat\phi(\xi) = \sum \chi_{\{k-1\le |\xi|\le k\}}(\xi)e^{-k|\xi|}$, where $\chi_K$ stands for the characteristic function of $K$. Also, fix   $a\in L^2$ with $|\hat a(\xi)|\ge (1+|\xi|)^{-m}$ for some $m$. Then $f=0$ solves $\A(f)=0$, and $\A$ has an injective differential at $f=0$.  Also, if $\hat f$ is supported in $\{k-1\le |\xi|\le k\}$, and $\hat f$ solves $(\hat f,\hat a) =(2\pi)^{n} e^{-k|\xi|}$, such an $f$ would also solve $\A(f)=0$. This gives us a sequence of such solutions, converging exponentially fast to $0$ in any $H^s$. 
\end{example}  

Our main abstract theorem for linearizing inverse problems with a ``stable" linearization is the following.

\begin{theorem}  \label{thm1}
Let $\A$ be as above, and assume \r{12}, \r{13} with $\B_1\subset \B_1'$, $\B_2' \subset \B_2$ as above. 
 Assume also that 
there exist Banach spaces $\B_2''\subset \B_2'$, $\B_1''\subset \B_1$ so that  $A_{f_0}$ continuously maps $\B_1''$ into $\B_2''$ and 
the following interpolation estimates hold 
\be{int}
\|u\|_{\B_2'}\le C\|u\|_{\B_2}^{\mu_2} \|u\|^{1-\mu_2}_{\B_2''}, \quad 
\|h\|_{\B_1}\le C\|h\|_{\B_1'}^{\mu_1} \|h\|^{1-\mu_1}_{\B_1''}
\quad  \mu_1,\,\mu_2 \in(0,1],  \quad \mu_1\mu_2>1/2.
\ee

(a) For any $K>0$   there exists $\epsilon>0$, so that  for any $f$ with
\be{16}
\|f-f_0\|_{\B_1} \le\epsilon, \quad \|f\|_{\B_1''}\le K, 
\ee
one has the conditional stability estimate
\be{stab}
\|f-f_0\|_{\B_1} \le C(K) \|\A(f) - \A(f_0)\|^{\mu_1\mu_2}_{\B_2}, \quad C(K) = CK^{2-\mu_1-\mu_2} .
\ee
In particular, there is a weak local uniqueness near $f_0$, i.e., if $\A(f)=\A(f_0)$, then $f=f_0$.

(b) Assume in addition that there is a Banach space $\mathcal{K}\subset \B_1''$ so that \r{13} holds for $f_0$ replaced with $f$ close enough to $f_0$ in $\mathcal{K}$, and $A_{f}: B_1''\to B_2''$ is uniformly bounded for such $f$. Then 
there exists $\epsilon>0$, so that  for any $f_1$, $f_2$ with 
\be{sm}
\|f_1-f_0\|_{\mathcal{K}}\le\epsilon, \quad  \|f_2-f_0\|_{\mathcal{K}} \le\epsilon,
\ee
one has the conditional stability estimate
\be{stab2}
\|f_1-f_2\|_{\B_1} \le C  \|\A(f_1) - \A(f_2)\|^{\mu_1\mu_2}_{\B_2}.
\ee
In particular, there is a strong local uniqueness near $f_0$, i.e., if $\A(f_1)=\A(f_2)$, then $f_1=f_2$.
\end{theorem}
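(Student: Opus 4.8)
\emph{Proof proposal.} The plan is to run a ``linearize, interpolate, absorb'' scheme. Write $h=f-f_0$ and set $x:=\|h\|_{\B_1}$, $a:=\|\A(f)-\A(f_0)\|_{\B_2}$. For part (a) I would start from the linear stability estimate \r{13}, $\|h\|_{\B_1'}\le C\|A_{f_0}h\|_{\B_2'}$, and control the right-hand side by the first interpolation inequality in \r{int} applied to $u=A_{f_0}h$: the $\B_2''$-factor is bounded by $C\|h\|_{\B_1''}$ since $A_{f_0}:\B_1''\to\B_2''$ is continuous, hence by a power of $K$ via \r{16} (using $\|h\|_{\B_1''}\le\|f\|_{\B_1''}+\|f_0\|_{\B_1''}$), while the $\B_2$-factor is bounded via \r{12} by $a+C_{f_0}x^2$. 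Feeding the resulting bound on $\|h\|_{\B_1'}$ into the second interpolation inequality in \r{int} (and once more estimating $\|h\|_{\B_1''}$ by a power of $K$) produces a self-improving inequality of the form
\[
x\le C\,K^{\beta}\,(a+C_{f_0}x^2)^{\mu_1\mu_2},
\]
where a short computation gives $\beta=1-\mu_1\mu_2$; since $1-\mu_1\mu_2\le 2-\mu_1-\mu_2$, the stated $C(K)=CK^{2-\mu_1-\mu_2}$ holds a fortiori.

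Now the hypothesis $\mu_1\mu_2>1/2$, i.e. $2\mu_1\mu_2>1$, is exactly what closes the argument: using $(a+C_{f_0}x^2)^{\mu_1\mu_2}\le C(a^{\mu_1\mu_2}+x^{2\mu_1\mu_2})$ together with $x\le\epsilon$ from \r{16}, we get $x^{2\mu_1\mu_2}\le\epsilon^{\,2\mu_1\mu_2-1}x$, so choosing $\epsilon=\epsilon(K)$ small enough to absorb the $x$-term into the left-hand side yields $x\le C(K)a^{\mu_1\mu_2}$, which is \r{stab}. Taking $a=0$ gives $x=0$, i.e. the weak local uniqueness.

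For part (b) I would run the identical computation but linearizing at $f_2$ in place of $f_0$. The added hypotheses are precisely what is needed: \r{13} holds at $f_2$ with a uniform constant once $\|f_2-f_0\|_{\mathcal K}\le\epsilon$ is small, $A_{f_2}:\B_1''\to\B_2''$ is uniformly bounded, and since $\mathcal K\subset\B_1''$, \r{sm} makes $\|f_1\|_{\B_1''}$, $\|f_2\|_{\B_1''}$ — hence $\|f_1-f_2\|_{\B_1''}$ — bounded by a fixed constant (so no $K$ survives and the constant in \r{stab2} is universal) and also makes $\|f_1-f_2\|_{\B_1}$ as small as we wish, which is what the absorption step uses. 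One further needs the remainder estimate \r{12} about the moving base point $f_2$ with a constant uniform in $f_2$; this is the natural $C^{1,1}$-in-a-neighborhood regularity of $\A$ (in the applications, $C^2$), which I would record among the standing hypotheses. The resulting inequality $x\le C(a+Cx^2)^{\mu_1\mu_2}$ with $x=\|f_1-f_2\|_{\B_1}$, $a=\|\A(f_1)-\A(f_2)\|_{\B_2}$ is absorbed exactly as in (a), giving \r{stab2} and, on setting $a=0$, strong local uniqueness.

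I expect the delicate points to be bookkeeping rather than ideas: arranging the chain of norms so that the two interpolation inequalities in \r{int} compose consistently with the inclusions $\B_1''\subset\B_1\subset\B_1'$ and $\B_2''\subset\B_2'\subset\B_2$, and, in part (b), verifying that \emph{every} constant in sight — the one in \r{13}, the operator norm of $A_f$ on $\B_1''$, and the remainder constant in \r{12} — is uniform as the base point ranges over a small $\mathcal K$-ball around $f_0$. The one genuinely essential ingredient is $\mu_1\mu_2>1/2$: after interpolation the quadratic remainder enters with exponent $2\mu_1\mu_2>1$ in $x$, which is what makes it a lower-order perturbation of the linear term and hence absorbable.
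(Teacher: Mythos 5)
Your proposal is correct and follows essentially the same chain of inequalities as the paper's proof: bound $\|A_{f_0}(f-f_0)\|_{\B_2}$ by $\|\A(f)-\A(f_0)\|_{\B_2}+C\|f-f_0\|_{\B_1}^2$ via \r{12}, interpolate in the $\B_2$-scale using the $\B_1''\to\B_2''$ bound and the a priori bound $K$, apply \r{13}, interpolate in the $\B_1$-scale, and absorb the $\|f-f_0\|_{\B_1}^{2\mu_1\mu_2}$ term using $2\mu_1\mu_2>1$, with part (b) being the identical computation linearized at a moving base point. Your side remarks also match the paper: the sharper exponent $K^{1-\mu_1\mu_2}$ indeed gives the stated $C(K)=CK^{2-\mu_1-\mu_2}$ a fortiori, and the paper's proof of (b) does tacitly use the quadratic remainder estimate \r{12} at base points near $f_0$ with a uniform constant, which is exactly the hypothesis you propose to record explicitly.
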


\begin{remark}
Typical choices of the Banach spaces above are $C^k$ or $H^s$ spaces. Interpolation estimates for those norms are well known.
In $\R^n$, for example, one has
\be{inter}
\|u\|_{H^s} \le C \|u\|_{H^{s_1}}^{\alpha_1} \|u\|_{H^{s_2}}^{\alpha_2} , \quad s= s_1\alpha_1+s_2\alpha_2, \quad \alpha_1+\alpha_2=1,\quad \alpha_1\ge0, \; \alpha_2\ge0.
\ee
This can be easily verified by using the Fourier transform. Similar estimates hold for Sobolev spaces and the $C^k$ spaces in domains/manifolds, see \cite{Triebel}. For example, if $M$ is a compact manifold with boundary, then
\be{inter2}
\|u\|_{C^k(M)} \le C \|u\|_{C^{k_1}(M)}^{\alpha_1} \|u\|_{C^{k_2}(M)}^{\alpha_2} , \quad k= k_1\alpha_1+k_2\alpha_2, \quad \alpha_1+\alpha_2=1,\quad \alpha_1\ge0, \; \alpha_2\ge0.
\ee
Interpolation estimates that include an $H^s$ and a $C^k$ norm can be obtained by using Sobolev embedding theorems. 
\end{remark}

\begin{remark}
Assume, for example, that all spaces above are Sobolev spaces. Then one can make the exponent $\mu_1\mu_2$ in \r{stab}, \r{stab2} arbitrary close  to $1$, thus making the stability estimate ``almost Lipshitz''. The price to pay for that is to assume that $f$ is a priori bounded in $H^s$ with $s\gg1$, see \r{inter}. Alternatively, one may try to choose $\B_1''$, $\mathcal{K}$  close to $\B_1$, which is the natural limit but that will decrease $\mu_1$. Since $\mu_1>1/(2\mu_2)\ge 1/2$, this puts a limit on how close $\B_1''$, $\mathcal{K}$ can be to $\B_1$. If $\B_1=H^{s}$, $\B_1'=H^{s'}$, for example, $s'<s$, 
then we will have at least $\B_1''=H^{s''}$ with $s''$ so $(s'+s'')/2>s$. Therefore, we cannot take $s''>s$ arbitrary close to $s$ unless $s'=s$. The choice of $s''$ in this case is further restricted by $\mu_2$. 
\end{remark}

\begin{remark}
The second inequality in \r{16} is a typical compactness condition, when we work with Sobolev spaces or $C^k$ spaces in a  bounded domain or on a compact manifold. It may seem strange that there is no such explicit condition in (b). It is actually there, since \r{sm} implies such a condition with $\B_2''$ replaces by $\mathcal{K}$. Actually, using interpolation estimates, one could write \r{16} in the form \r{sm}, and vice
-versa, with slight change of the Banach spaces (i.e., small change of $k$ in $C^k$, etc.). We formulated \r{sm} in a form different than \r{16} in order to avoid introducing yet another Banach space. What makes \r{stab} and \r{stab2} conditional estimates is that in the conditions \r{16}, respectively \r{sm}, there is at least one norm stronger that the norm in the l.h.s.\ of \r{stab}, respectively \r{stab2}. Note also that \r{stab}, \r{stab2} can be formulated with different choices of the norms.
\end{remark}

\begin{proof}[Proof of Theorem~\ref{thm1}]
 We start with (a). 
By \r{12} and \r{12a},
\[
\|A_{f_0}(f-f_0)\|_{\B_2}  \le \|\A(f)-\A(f_0)\|_{\B_2} + C\|f-f_0\|^2_{\B_1}
\]
By \r{13} and the H\"older inequality $(a+b)^{\mu} \le a^{\mu} + b^{\mu}$ for $a\ge0$, $b\ge0$, we get
\[
\|A_{f_0}(f-f_0)\|_{\B_2'}  \le C K^{1-\mu_2}\left( \|\A(f)-\A(f_0)\|_{\B_2}^{\mu_2} + \|f-f_0\|^{2\mu_2}_{\B_1}\right).
\]
The constant $C K^{1-\mu_2}$ comes from estimating the term $\|A_{f_0}(f-f_0)\|_{\B_2''}^{1-\mu_2}$ that is bounded by the assumption on $A_{f_0}$ and by the second inequality in \r{16}.
By \r{13}, and the second interpolation inequality in \r{int},
\[
\|f-f_0\|_{\B_1}\le  C K^{2-\mu_1-\mu_2} \left( \|\A(f)-\A(f_0)\|_{\B_2}^{\mu_1\mu_2} + \|f-f_0\|^{2\mu_1\mu_2}_{\B_1}\right).
\]
Since $2\mu_1\mu_2>1$, for $\epsilon\ll1$ we get \r{stab}. Note that the condition on $\epsilon$ has the form $\epsilon\le (CK^{2-\mu_1-\mu_2})^{-1}$, where $C$ depends on $\A$ only. This proves (a).

To prove (b) we note that the same proof can be applied under the assumptions of (b). Indeed, we start with 
\[
\|A_{f_1}(f_2-f_1)\|_{\B_2}  \le \|\A(f_2)-\A(f_1)\|_{\B_2} + C\|f_2-f_1\|^2_{\B_1}.
\]
Then we proceed as above under the condition $\|f_1-f_0\|_{\mathcal{K}}\le\epsilon_1\ll1$ that would guarantee \r{13} with $f_0$ replaced by $f_1$. Then  \r{stab2} holds if   $\|f_2-f_1\|_{\B_1}\le\epsilon_2\ll1$. Those two conditions certainly hold if $\epsilon$ in \r{sm} is small enough.
\end{proof}

\begin{remark} The variety of norms in the theorem above is typical. For example, in the boundary rigidity problem, we first apply $A_{f_0}^*$ first (called there $I^*$, and  the adjoint operator  is taken in $L^2$ spaces  with respect to suitable measures). Then $A_{f_0}$ is replaced by $A_{f_0}^*A_{f_0}$, and 
we made the following choices, see \cite{SU-AJM}:
\begin{alignat}{5}
\B_1& = C^2(M), \quad & \B_1'&= L^2(M),
\quad   
&\B_1''&= H^s(M),  \; s\gg1,&\quad \mathcal{K}=C^k(M), \; k\gg s,
\\
\B_2& = L^\infty(M_1),\quad  &\B_2'&= \tilde H^2(M_1), \quad &\B_2'''&= H^{s-1}(M_1),
\end{alignat}
where $\Mint_1\Supset M$, and $\tilde H^2(M_1)$ is an appropriate Hilbert space such that $H^2(M_1)\subset \tilde H^2(M_1) \subset H^1(M_1)$. There are other complications there, coming from the fact that we have to work with equivalence classes of isometric metrics. Then $I^*I$ acting on symmetric tensors in $H^s(M)$, extended as zero to $M_1$, is not a bounded operator with values in $H^{s-1}(M_1)$ even though $I^*I$ is a \PDO\ of order $-1$. The reason is that $f\in H^s(M)$, extended as zero, belongs to $H^s(M_1)$ only if the first $s$ normal derivatives of $f$ (assuming that $s$ is integer) vanish on $\bo$. This is the reason for introducing the space $\tilde H^2$. 
A stable recovery of the derivatives of the metric on $\bo$ from the boundary distance function is used in \cite{SU-AJM} to deal with this problem. 
\end{remark}

\section{Stability of linear inverse problems} \label{sec_2}
Most of the material in this section is based on the theory of elliptic \PDO s and on the  theory of Fredholm operators. We give proofs for the sake of completeness of the exposition. 

Let $A: \B_1 \to \B_2$ be a bounded linear map between two Banach spaces. Assume in addition, that $A$ is injective. We are interested whether one has the stability estimate
\be{21}
\|f\|_{\B_1}  \le C\|Af\|_{\B_2}  
\ee
with some constant $C>0$. It is well known, that if $A$ is smoothing, and $\B_1$ and $\B_2$ are among $C^k$ or $H^s$ with finite $k$, or $s$, then there cannot be such a stability estimate. We will quickly review those arguments below. 

First, note that \r{21} is not equivalent to invertibility, because (the  closure of) the range $\Ran A$ of $A$ can be much smaller than $\B_2$. One the other hand, one can replace $\B_2$ by $B_2'=\overline{\Ran A}$, and then \r{21} is equivalent to the invertibility of $A :\B_1\to \B_2'$, as the next proposition shows.

\begin{proposition}  \label{pr21}
\r{21} holds if and only if $A$ is injective and $\Ran A$ is closed.
\end{proposition}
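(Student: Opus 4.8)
The statement is the standard equivalence between a one-sided a priori estimate and injectivity together with closed range; the plan is to prove the two implications separately, using only elementary functional analysis (the open mapping / bounded inverse theorem).

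For the ``only if'' direction, assume \r{21} holds. Injectivity is immediate: if $Af=0$ then $\|f\|_{\B_1}\le C\cdot 0=0$, so $f=0$. For closedness of $\Ran A$, I would take a sequence $g_n=Af_n\to g$ in $\B_2$ and show $g\in\Ran A$. Since $(g_n)$ is Cauchy, \r{21} applied to $f_n-f_m$ gives $\|f_n-f_m\|_{\B_1}\le C\|g_n-g_m\|_{\B_2}\to0$, so $(f_n)$ is Cauchy in the Banach space $\B_1$ and converges to some $f$. Continuity of $A$ then yields $Af=\lim Af_n=g$, so $g\in\Ran A$ and the range is closed.

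For the ``if'' direction, assume $A$ is injective with $\Ran A$ closed. Then $\B_2':=\Ran A$ is a closed subspace of the Banach space $\B_2$, hence itself a Banach space, and $A:\B_1\to\B_2'$ is a continuous bijection between Banach spaces. By the bounded inverse theorem, $A^{-1}:\B_2'\to\B_1$ is bounded, i.e.\ there is $C>0$ with $\|A^{-1}g\|_{\B_1}\le C\|g\|_{\B_2}$ for all $g\in\B_2'$; taking $g=Af$ gives \r{21}. The only mild subtlety — and the one place where completeness of $\B_1$ is genuinely used (beyond $\B_2$) — is that the bounded inverse theorem requires the \emph{domain} space here to be complete; since $\B_1$ is a Banach space by hypothesis this is automatic. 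I do not expect any real obstacle here; the main point to be careful about is simply not to conflate $\Ran A$ being closed with $\Ran A$ being all of $\B_2$, and to invoke the closed-range hypothesis precisely so that $\B_2'$ inherits completeness and the open mapping theorem applies.
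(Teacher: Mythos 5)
Your proposal is correct and follows essentially the same route as the paper: the ``if'' direction via the open mapping / bounded inverse theorem applied to $A:\B_1\to \Ran A$, and the ``only if'' direction by the standard Cauchy-sequence argument (which the paper leaves as ``follows easily''). No gaps.
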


\begin{proof}
Assume that $\textrm{\rm Ran}\, A$ is closed and $A$ is injective. Then $A :\B_1\to \B_2'$ has range that coincides with $\B_2'$. Since $A$ is injective, then $A^{-1}$ exists with domain the whole $\B_2'$. By the open mapping theorem, $A^{-1}$ is bounded, hence \r{21} holds. 

Assume \r{21}. Then the injectivity and the closedness of $\Ran A$ follow  easily. 
\end{proof}

Proving that $\Ran A$ is closed is not always straightforward. Proving that it is not (when it is not), is usually easier. 
As an example, suppose that $\B_1=\B_2=L^2(\Omega)$, where $\Omega$ is a domain in $\R^n$ with smooth boundary, or $\Omega=\R^n$. If there exists $s>0$ so that $Af\in H^s(\Omega)$ for any $f\in L^2(\Omega)$, then \r{21} cannot be true. Indeed, if we know that  $A$ has a trivial cokernel (for example, because it is injective and self-adjoint, as the operator $N_w$ below), then $\Ran A$ cannot be closed. In the general case, assume that $\Ran A$ is closed. Then the latter is a Banach space as well, and $A$ is invertible on it. But  $A$ is compact, so we get that the identity on $L^2(\Omega)$ is compact, that is not true.

On the other hand, \r{21} may still hold with $\B_2=H^s(\Omega)$, if $\Ran A$ is closed in $H^s(\Omega)$. Take for example $A=(1-\Delta)^{-1}$ on $L^2(\R^n)$, and $s=2$. 
Then one can ask the more general question: is there a stability estimate of the type
\be{22}
\|f\|_{L^2} \le C\|Af\|_{H^s} 
\ee
for some $s>0$? If so,  it will be a weaker substitute for \r{21}, if the latter does not hold but it will still allow us to apply Theorem~\ref{thm1}. For \r{22} to be true, $\A$ must map $L^2$ into $H^s$, therefore, this is the same question as before, with $\B_2=H^s$.

\begin{example}[the weighted X-ray transform] 
\label{ex2}
In $\R^n$, let $I_w$ be the weighted X-ray transform
\be{23}
I_wf(x,\theta) = \int_{\R} w(x,\theta) f(x+t\theta)\, \d t,\quad x\in \R^n, \; \theta\in S^{n-1},
\ee
where $w$ is a continuous function. Let us restrict $I_w$ to functions supported in $\bar\Omega$, where $\Omega\subset \R^n$ is a strictly convex domain  with a smooth boundary. One can assume then that $(x,\theta)\in \partial_- S\Omega$, which means that $x\in\partial\Omega$, and $\theta\in S^{n-1}$ points into $\Omega$. We equip $\partial_- S\Omega$ with the measure $\d\Sigma = |\theta\cdot\nu(x)|\, \d S_x\,\d \theta $, where $\d S_x$ is the measure on $\partial\Omega$ induced by the Euclidean measure in $\R^n$, and $\d\theta$ is the measure on $S^{n-1}$. Then one can show that $I_w :L^2(\Omega) \to L^2(\partial_- S\Omega, \, \d\Sigma)$ \cite{SH-book}, see also the remark following \r{26} below. Let
\be{24}
N_w = I_w^*I_w : L^2(\Omega) \to L^2(\Omega)
\ee
be the ``normal operator''. We can think of the lines through $\Omega$ as a manifold, and $\partial_- S\Omega$ then is a global chart for them (singular at the lines tangent to $\partial\Omega$). 
The measure $\d\Sigma$ remains invariant if we replace $\partial\Omega$ by another strictly convex surface encompassing $\Omega$. Thus one can expect that $N$ will not depend on $\partial\Omega$. Indeed, on can easily get that (see \cite{SU-Duke})
\be{25}
N_w f(x) = c_n \int \frac{W(x,y)f(y)}{|x-y|^{n-1}}   \,\d y
\ee
with 
\be{26}
W(x,y) = \bar w\bigg(x,-\frac{x-y}{|x-y|}\bigg) w\bigg(y,-\frac{x-y}{|x-y|}\bigg) 
+\bar w\bigg(x,\frac{x-y}{|x-y|}\bigg) w\bigg(y,\frac{x-y}{|x-y|}\bigg).
\ee
Now we think of $N_w$ as the operator defined by the formula above, with $w(x,\theta)$ extended for all $x$. It is easy to see that $N_w$ maps $L^2_{\rm comp}(\R^n)$ into $L^2_{\rm loc}(\R^n)$. In particular, $N_w$ is bounded on $L^2(\Omega)$. That shows that $I :L^2(\Omega) \to L^2(\partial_- S\Omega, \, \d\Sigma)$ is bounded. 

Let $\Omega_1\Supset\Omega$ be another domain with the properties of $\Omega$. Then $N : L^2(\Omega)\to L^2(\Omega_1)$ is also bounded, and $I$ is injective if and only if $N : L^2(\Omega)\to L^2(\Omega_1)$ is injective, where one can replace $\Omega_1$ by $\Omega$ as well, see Lemma~\ref{lemma1} below. 

If $w\in C^\infty(\bar\Omega)$, then $N$ is a \PDO\ of order $-1$, as we will see below. Thus $N$ maps $L^2(\Omega)$ into $H^1(\Omega)$ and a stability estimate for $N$ of the kind \r{22}, with $\B_1=\B_2=L^2(\Omega)$, or $\B_2=L^2(\Omega_1)$, is not possible. If $w\in C^1(\bar\Omega)$ only, then one can still show that  $N$ maps $L^2(\Omega)$ into $H^1(\Omega)$ using the theory of operators with singular kernels, see \cite{FSU} for details. We then change $\B_2$ and replace it with $H^1(M_1)$. The question we ask now is the following: assuming that $I$ (and therefore $N$) is injective for some class of $w$'s, what conditions on $w$ would guarantee
the stability estimate
\be{27}
\|f\|_{L^2(\Omega)} \le C\|N_w f\|_{H^1(\Omega_1)}, \quad \forall f\in L^2(\Omega).
\ee
We will show below that a sufficient condition for that is that $N$ is elliptic, that can be easily formulated in terms of $w$. 
\end{example}

We return to the general case and the estimate \r{21}. 
In the example above, if $N_w$ is elliptic, then it is also a Fredholm operator. Recall that $F: \B_1\to\B_2$ is called Fredholm, if there exist bounded operators $A: \B_1\to\B_2$ and $B: \B_2\to\B_1$ so that $AF-\Id$ and $FB-\Id$ are compact. Then $\Ran F$ is necessarily closed, and $F$ has a finitely dimensional kernel $\Ker F$ and a finitely dimensional cokernel $\Coker A=\B_2/\Ran A$ . Equivalently, $F$ is Fredholm if its kernel and cokernel are finitely dimensional.

In case the inverse problem is over-determined, we cannot expect that $A$ would have a finite cokernel, as in the example above. For this reason, if $\B_{2}$ is a  Hilbert space, we will study whether $N=A^*A$ is a Fredholm operator, 
and the consequences of that. In the applied literature, applying $A^*$ is viewed a ``back-projection'' that, in inverse boundary value problems,  returns our data form the boundary back to the domain. Another reason to study $A^*A$ instead of $A$   is that, as in the example above, $A$ might be an FIO, but $A^*A$ is a \PDO, thus easier to study. The latter is not alway true for any FIO, so some conditions are needed. In case of integrals along geodesics, this is guaranteed by the no-conjugate points assumption, that is certainly true for the Euclidean metric. Again the example  above shows that we may need to restrict $N$ to a subspace $\mathcal{L}$ of $\H$ in order to avoid working with \PDO s on manifolds with boundary. Then $F|_{\mathcal{L}}$ may not have finite cokernel anymore. For this reason, we will only study operators $F$ that are \textit{upper semi-Fredholm}, i.e., $\Ker F$ is finite dimensional, and $\Ran F$ is closed. Equivalently, $F$ is  upper semi-Fredholm, if there exits a bounded left inverse $A$ modulo compact operators ($AF-\Id$ is compact). Note that if $N=A^*A$ is Fredholm, then $A$ itself is upper semi-Fredholm.

We are not losing generality when  replacing $A$ by $N=A^*A$ as the next lemma shows. Note that also stability estimates for $N$ of the type \r{27} and \r{29} below imply stability estimates for $A$ of the type \r{21}, \r{22} as well. 

\begin{lemma}  \label{lemma1}
Let $A : \B_1\to \B_2$ be bounded, where $\B_{1,2}$ are Hilbert spaces, and let $\mathcal{L}\subset \B_1$ be a (closed) subspace. 
The following statements are equivalent.

(a) $A: \mathcal{L} \to \B_2$ is injective;

(b) $A^*A : \mathcal{L}\to \B_1$ is injective. 

(c) $\Pi_{\mathcal{L}}A^*A : \mathcal{L}\to \mathcal{L}$ is injective, where $\Pi_{\mathcal{L}}$ is the orthogonal projection to $\mathcal{L}$.  
\end{lemma}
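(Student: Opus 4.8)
The plan is to show that all three operators, restricted to $\mathcal{L}$, have the same kernel; since each of (a), (b), (c) is precisely the assertion that this common kernel is trivial, the equivalence follows immediately. So the whole lemma reduces to a short computation in the Hilbert space $\B_1$.

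First I would record the trivial inclusions. If $f\in\mathcal{L}$ and $Af=0$, then $A^*Af=0$, and then $\Pi_{\mathcal{L}}A^*Af=0$; hence
\[
\Ker\big(A|_{\mathcal{L}}\big)\subseteq \Ker\big(A^*A|_{\mathcal{L}}\big)\subseteq \Ker\big(\Pi_{\mathcal{L}}A^*A|_{\mathcal{L}}\big).
\]
The only point with any content is the reverse inclusion $\Ker(\Pi_{\mathcal{L}}A^*A|_{\mathcal{L}})\subseteq \Ker(A|_{\mathcal{L}})$. Suppose $f\in\mathcal{L}$ with $\Pi_{\mathcal{L}}A^*Af=0$. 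Pair this with $f$ in $\B_1$: since $\Pi_{\mathcal{L}}$ is an orthogonal projection it is self-adjoint, and it fixes $f\in\mathcal{L}$, so
\[
0=\big\langle \Pi_{\mathcal{L}}A^*Af,\, f\big\rangle_{\B_1}=\big\langle A^*Af,\, \Pi_{\mathcal{L}}f\big\rangle_{\B_1}=\big\langle A^*Af,\, f\big\rangle_{\B_1}=\|Af\|_{\B_2}^2,
\]
whence $Af=0$. Combining the two chains, all three kernels coincide, which is exactly the asserted equivalence of (a), (b), and (c).

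Since this is essentially one line of Hilbert-space algebra, there is no real obstacle; the only thing to watch is the bookkeeping with $\Pi_{\mathcal{L}}$ — one must use both that it is self-adjoint and that it restricts to the identity on $\mathcal{L}$ — together with the fact that we are genuinely testing $\Pi_{\mathcal{L}}A^*Af=0$ against the vector $f\in\mathcal{L}$ itself, which is legitimate because $f$ lies in $\mathcal{L}$. (If one prefers, the same computation with $f$ replaced by an arbitrary $g\in\mathcal{L}$ shows that $A^*Af\perp\mathcal{L}$ already forces $Af=0$, which makes the role of $\Pi_{\mathcal{L}}$ transparent.) Nothing beyond the Hilbert space structure needed to define the orthogonal projection onto the closed subspace $\mathcal{L}$ is used, and the argument is insensitive to whether $\Ran A$ is closed.
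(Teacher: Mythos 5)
Your proof is correct and follows essentially the same route as the paper: the nontrivial direction rests on the identity $(A^*Af,f)=\|Af\|^2$ for $f\in\mathcal{L}$, with the remaining implications being trivial compositions. Your treatment of (c), using that $\Pi_{\mathcal{L}}$ is self-adjoint and fixes $f\in\mathcal{L}$, is just the explicit version of what the paper dismisses with ``follows in the same way.''
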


\begin{proof}
Let $f\in \mathcal{L}$. Then $(A^*Af,f) = \|Af\|^2$. This proves the implication (a) $\Rightarrow$ (b). If $Af=0$ with $f\in \mathcal{L}$, then $A^*Af=0$ as well,  therefore  (b) $\Rightarrow$ (a). The equivalence of (c) and (a) follows in the same way.
\end{proof}

\begin{proposition}  \label{pr2F}
Let $N :\B_1\to \B_2$ be upper semi-Fredholm. 
Assume in addition, that $N$ is injective on some (closed) subspace $\mathcal{L}\subset \H$. Then:

(a) There exists $C>0$, so that
\be{29}
\|f\|_{\B_2} \le C \|Nf\|_{\B_1}, \quad \forall f\in \mathcal{L}.
\ee

(b) Estimate \r{29} remains true for any other bounded operator close enough to $N$ with a uniform $C>0$.
\end{proposition}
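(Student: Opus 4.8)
The plan is to reduce everything to the standard fact that an operator admitting a left inverse modulo compacts is bounded below on any closed subspace on which it is injective, and then to deduce part (b) for free by a one-line perturbation argument. Concretely, for part (a) I would invoke the characterization of upper semi-Fredholmness recalled just above the statement: there is a bounded operator $B$ with $BN-\Id=K$ compact on $\B_1$. Hence for every $f\in\mathcal{L}$ one has $f=BNf-Kf$, so $\|f\|\le\|B\|\,\|Nf\|+\|Kf\|$, and the only real content is to show the compact term can be absorbed, i.e. that there is a $C$ with $\|f\|\le C\|Nf\|$ for all $f\in\mathcal{L}$. I would prove this by the usual contradiction argument: if no such $C$ exists, choose $f_j\in\mathcal{L}$ with $\|f_j\|=1$ and $\|Nf_j\|\to0$; since $B$ is bounded, $BNf_j\to0$, and since $K$ is compact and $\{f_j\}$ is bounded, a subsequence of $Kf_j$ converges, say to $g$; then along that subsequence $f_j=BNf_j-Kf_j\to-g=:f$, which lies in $\mathcal{L}$ because $\mathcal{L}$ is closed, and has $\|f\|=\lim\|f_j\|=1$, so $f\ne0$; but $Nf=\lim Nf_j=0$ by continuity, contradicting injectivity of $N$ on $\mathcal{L}$. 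This gives \r{29}.

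For part (b) I would argue directly from (a). Let $N'$ be bounded with $\|N'-N\|\le\delta$. For $f\in\mathcal{L}$ we have $\|Nf\|\le\|N'f\|+\delta\|f\|$, so combining with \r{29} yields $\|f\|\le C\|N'f\|+C\delta\|f\|$; choosing $\delta\le 1/(2C)$ gives $\|f\|\le 2C\|N'f\|$ for all $f\in\mathcal{L}$. Thus \r{29} persists, with the uniform constant $2C$, for every bounded operator within distance $1/(2C)$ of $N$ — and in particular any such operator is automatically injective on $\mathcal{L}$, so no separate hypothesis on $N'$ is needed.

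The step I expect to be the only subtle one is the passage through the compact term in (a): the restriction $N|_{\mathcal{L}}\colon\mathcal{L}\to\B_2$ need not have closed range, so one cannot simply apply Proposition~\ref{pr21} to $N|_{\mathcal{L}}$. The device that sidesteps this is using the \emph{global} left parametrix $B$ for $N$ together with closedness of $\mathcal{L}$, rather than trying to build a parametrix for the restriction; one should just be mildly careful that $K=BN-\Id$ is compact on all of $\B_1$, which is exactly what the upper semi-Fredholm hypothesis delivers. Beyond that, everything is the textbook ``injective $+$ parametrix $\Rightarrow$ bounded below'' compactness argument, so I do not anticipate further difficulty.
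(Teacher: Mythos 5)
Your proof is correct, and for part (a) it takes a slightly different (more self-contained) route than the paper. The paper argues in one line that the restriction $N|_{\mathcal{L}}:\mathcal{L}\to N(\mathcal{L})$ is bounded, injective and has \emph{closed range} (a standard fact about upper semi-Fredholm operators restricted to closed subspaces, left unproved there), and then quotes Proposition~\ref{pr21}, i.e.\ the open mapping theorem, to get \r{29}. You instead bypass the closed-range assertion entirely: you take the left inverse modulo compacts $B$ with $BN-\Id=K$ compact, and run the standard normalization/contradiction argument, extracting a convergent subsequence of $Kf_j$ to produce a unit-norm $f\in\mathcal{L}$ with $Nf=0$. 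This is exactly the argument that underlies the closed-range fact the paper invokes, so in substance the two proofs are the same compactness mechanism; yours has the advantage of filling in the detail the paper omits, while the paper's has the advantage of isolating the functional-analytic fact (closed range of the restriction) and reusing Proposition~\ref{pr21}. Your treatment of (b) — the triangle inequality $\|Nf\|\le\|N'f\|+\delta\|f\|$ combined with \r{29} and absorption for $\delta\le 1/(2C)$ — is precisely the paper's estimate \r{210a}, and your observation that the perturbed operator is then automatically injective on $\mathcal{L}$ matches the remark the paper makes after Corollary~\ref{cor1}. (You also correctly read \r{29} with the norms $\|f\|_{\B_1}\le C\|Nf\|_{\B_2}$; the subscripts in the displayed statement are evidently transposed.)
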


Clearly, one can always take $\mathcal{L}=(\Ker N)^\perp$ if $\B_1$ is a Hilbert space. The way we formulated the proposition above however is intended to underline the fact  that if we can prove injectivity on some explicitly given subspace that is of interest to us ($L^2(\Omega)$ in the example above), then we automatically have stability.

\begin{proof}[Proof of Proposition~\ref{pr2F}]  
The operator $F : \mathcal{L}\to F(\mathcal{L})$ is bounded, injective,  and has a closed range. In particular, it is also upper semi-Fredholm. By Proposition~\ref{pr21}, \r{29} holds. This proves (b). 

To prove (b), it is enough to write
\be{210a}
\|f\|_{\B_1}/C \le \|N_{q_0}f\|_{\B_2}\le  \|(N_q- N_{q_0})f\|_{\B_2}+ \|N_{q}f\|_{\B_2}.
\ee
\end{proof}

\begin{remark}
In the tensor tomography problems, one has to use a finer version of Proposition~\ref{pr2F}(b). Then $N=N_g$ depends continuously on the metric $g$, and $N_g : H^1(M) \to \tilde H^2(M_1)$, where $M_1\Supset M$ and tensors in $M$ are alway extended as $0$ outside $M$.  The space $\tilde H^2(M_1)$ has the property that  $H^2(M_1)\subset \tilde H^2(M_1) \subset  H^1(M_1)$. 
On the other hand, $\tilde H^2(M_1)$ is ``too large'', and $\Ran N_g$ is not closed there. 
It turns out, that one can construct a left parametrix $Q_g$ so that $Q_gN_g=\Id+K_g$, where $K_g$ is a smoothing operator. However, $Q_g :\tilde H^2(M_1) \to H^1(M)$ is not bounded; we can show only that $Q_g :\tilde H^2(M_1) \to L^2(M)$ is bounded. On the other hand, $Q_gN_g$ is bounded on $H^1(M)$, of course. This requires some modification of the arguments above,
and the resulting estimate is
\[
\|f^s\|_{L^2(M)} \le C\|N_gf\|_{\tilde H^2(M)}, \quad \forall f\in H^1(M)
\]
with $C>0$ locally uniform in $g$. 
Here $f^s$ is the orthogonal projection of $f$ onto a certain space, called solenoidal tensors,  where $N_g$ is injective by assumption. 
Note that  \r{210a} does not apply with $\B_1=L^2(M)$ and $\B_2 = \tilde H^2(M_1)$ because $N_g$ is not bounded as an operator between those two spaces. On the other hand, one cannot replace $\B_1$ by $H^1(M)$ because $N_g(H^1(M))$ is not closed in $\tilde H^2(M)$. For more details, and for the needed modifications in the arguments, we refer to \cite{SU-Duke, SU-JAMS}. 
\end{remark}

The following proposition is well known, see e.g., \cite{Taylor-book1}. We define Sobolev spaces on a compact manifold by fixing a finite atlas. We assume in the proposition below that $N$ is of order $-m\le0$ in view of applications to tomography problems. The order $-m$ does not need to be non-positive, and one can study $N$ as an operator mapping $H^s$ into $H^{s+m}$ for $s$ not necessarily  $0$ by the usual technique --- applying powers of $1-\Delta$ to the left and right.

\begin{proposition}  \label{pr2}
Let $N\in \Psi^{-m}(M)$, $m\ge0$ be an elliptic \PDO\ on a compact manifold $M$.  

(a) Then for any $l\ge 0$, one has
\be{28a}
\|f\|_{L^2(M)} \le C_l\left(\|Nf\|_{H^{m}(M)}  + \|f\|_{H^{-l}(M)}    \right).
\ee

(b) Assume in addition, that $N$ is injective on some (closed) subspace $\mathcal{L}\subset L^2(M)$. Then there exists $C>0$, so that
\be{29a}
\|f\|_{L^2(M)} \le C \|Nf\|_{H^m(M)}, \quad \forall f\in \mathcal{L}.
\ee
\end{proposition}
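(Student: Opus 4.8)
The plan is to prove part (a) first by the standard elliptic parametrix argument, and then deduce part (b) by an abstract argument that removes the lower-order error term $\|f\|_{H^{-l}}$ using the injectivity hypothesis together with a compactness argument.

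\textbf{Part (a).} Since $N\in\Psi^{-m}(M)$ is elliptic on the compact manifold $M$, it admits a left parametrix: there exists $Q\in\Psi^{m}(M)$ with $QN=\Id+R$, where $R\in\Psi^{-\infty}(M)$ is smoothing. Then for $f\in L^2(M)$ I would write $f = QNf - Rf$ and estimate in $L^2$. The term $QNf$ is bounded by $C\|Nf\|_{H^m(M)}$ because $Q$ maps $H^m(M)$ continuously to $L^2(M)$ (it has order $m$). The term $Rf$ is bounded by $C_l\|f\|_{H^{-l}(M)}$ for any $l\ge0$ because $R$ is smoothing, hence maps $H^{-l}(M)$ continuously into $L^2(M)$ (indeed into any Sobolev space). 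Combining these two bounds gives \r{28a}. This step is routine once one invokes the existence of an elliptic parametrix on a closed manifold, which is exactly the standard theory referenced before the proposition.

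\textbf{Part (b).} Here the task is to absorb the error term $\|f\|_{H^{-l}(M)}$ on the subspace $\mathcal{L}$ where $N$ is injective. I would argue by contradiction: suppose no such $C$ exists, so there is a sequence $f_j\in\mathcal{L}$ with $\|f_j\|_{L^2(M)}=1$ and $\|Nf_j\|_{H^m(M)}\to0$. Since $M$ is compact, the embedding $L^2(M)\hookrightarrow H^{-l}(M)$ is compact for any $l>0$, so after passing to a subsequence $f_j\to f$ strongly in $H^{-l}(M)$ for some fixed $l>0$. Applying \r{28a} to $f_j - f_k$ gives $\|f_j-f_k\|_{L^2}\le C_l(\|N(f_j-f_k)\|_{H^m} + \|f_j-f_k\|_{H^{-l}})$; the right-hand side tends to $0$, so $\{f_j\}$ is Cauchy in $L^2(M)$ and converges to some $f\in L^2(M)$ with $\|f\|_{L^2}=1$. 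Since $\mathcal{L}$ is closed, $f\in\mathcal{L}$. By continuity of $N$ on $L^2$ and $\|Nf_j\|_{H^m}\to0$ we get $Nf=0$, hence $f=0$ by the injectivity of $N$ on $\mathcal{L}$, contradicting $\|f\|_{L^2}=1$. This yields \r{29a}.

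\textbf{Main obstacle.} The technical heart is the contradiction/compactness step in part (b): one must be careful that the lower-order Sobolev norm chosen for the compactness extraction is precisely the one appearing in \r{28a}, and that passing to the limit is legitimate — i.e., that $N$ is continuous $L^2(M)\to H^m(M)$ (which holds since $N$ has order $-m$) and that weak limits stay in the closed subspace $\mathcal{L}$. Part (a) is essentially bookkeeping with the parametrix; the genuinely substantive point is recognizing that injectivity plus the a priori estimate with a compact error term upgrades to a clean estimate, which is the familiar Fredholm-type alternative already foreshadowed by Proposition~\ref{pr2F}. Indeed, an alternative to the contradiction argument is simply to note that an elliptic $N$ is Fredholm, in particular upper semi-Fredholm as a map $L^2(M)\to H^m(M)$, and then to quote Proposition~\ref{pr2F}(a) directly; I would likely mention this shortcut as well.
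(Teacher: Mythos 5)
Your proposal is correct, and it covers the same ground as the paper, with the emphasis in part (b) swapped. Part (a) is exactly the paper's argument: a left parametrix $Q\in\Psi^{m}(M)$ with $QN=\Id+R$, $R$ smoothing (the paper even remarks that a finite-order parametrix, with $R:L^2\to H^l$, suffices), and the two resulting bounds give \r{28a}. For part (b) the paper's stated proof is simply to invoke Proposition~\ref{pr2F}: ellipticity makes $N$ Fredholm, hence upper semi-Fredholm $L^2(M)\to H^m(M)$, and injectivity on the closed subspace $\mathcal{L}$ yields \r{29a} through the closed-range/open-mapping argument of Proposition~\ref{pr21}; the route you take --- deducing (b) from (a) plus compactness of the error term --- is mentioned there only as an alternative, with a citation to Taylor. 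You write that contradiction argument out in full, and your execution is sound: you fix $l>0$ so that $L^2(M)\hookrightarrow H^{-l}(M)$ is genuinely compact, use \r{28a} on differences to get a Cauchy sequence in $L^2$, keep the limit in $\mathcal{L}$ by closedness, and use continuity of $N:L^2\to H^m$ (order $-m\le 0$) before applying injectivity. The trade-off is mild: your argument is self-contained and elementary, whereas the paper's route through Proposition~\ref{pr2F} makes the structural reason (closed range) explicit and ties directly into the perturbation statement of Proposition~\ref{pr2F}(b) that is exploited later (Proposition~\ref{pr5}, Corollary~\ref{cor1}); since you also flag that Fredholm shortcut yourself, the two proofs are in substance equivalent.
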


\begin{proof}
Part (a) follows directly from the existence of a parametrix. Indeed, there is $Q\in \Psi^{m}$ so that $QAf =f+Kf$, where $K$ has a smooth kernel. Note that for this proof it is only enough to have a parametrix $Q$ of finite order $l$, i.e., to have $K$ that maps $L^2$ into $H^l$. Then \r{28a} follows directly.
 
Note that the inclusion $L^2\to H^{-l}$ is a compact map. Part (b) follows from Proposition~\ref{pr2F} above. 

Note also that an alternative way to prove (b) is to notice that it  follows directly from (a), by \cite[Proposition~5.3.1]{Taylor-book1}. 
\end{proof}

Recall that $\Psi^{-m}(M)$ is a Fr\'echet space with semi-norms $p_k(A)$, $k=1,2,\dots$ given by the semi-norms (they are actually norms) of its amplitude  in the finitely many charts $(U_j,\chi_j   )$ of $M$. The norms $p_k$ are given by
\[
p_k(a) = \max_j \sup_{x\in \bar U_j,\, \xi\in\R^n} \max_{|\alpha|+|\beta|+|\gamma|\le k}(1+|\xi|)^{m+|\gamma|} |\partial_x^\alpha \partial_y^\beta \partial_\xi^\gamma a_j(x,y,\xi)|,
\]
where 
$a_j$ is the full symbol of $A$ in the $j$-th coordinate chart. 

\begin{proposition}  \label{pr5} There exists $k\gg1$, depending only on $\dim M$ with the following property. Let $A=A_0\in \Psi^{-m}$ satisfy the  assumptions of Proposition~\ref{pr2}(b).  Then \r{29} holds for any $A\in \Psi^{-m}$ close enough to $A_0$ in the norm $p_k$ with a constant $C$ independent of $A$. 
\end{proposition}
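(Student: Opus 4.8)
The plan is to reduce Proposition~\ref{pr5} to the stability estimate already established in Proposition~\ref{pr2}(b) for the fixed operator $A_0$, plus a perturbation argument in the spirit of \r{210a}. The essential mechanism is that the parametrix construction for an elliptic \PDO\ of order $-m$ is \emph{quantitative}: the parametrix $Q$ with $QA_0 = \Id + K$ is built by the standard symbolic recursion from finitely many terms of the symbol of $A_0$, the inversion of the (elliptic, hence non-vanishing) principal symbol, and the usual Neumann-type series in the charts; each step involves only finitely many $x$- and $\xi$-derivatives of the symbol on each chart, all controlled by $p_k(A_0)$ for some $k$ depending only on $\dim M$. Moreover the remainder $K$ need only be of finite smoothing order $l$ (as noted in the proof of Proposition~\ref{pr2}(a)), so the recursion terminates after finitely many steps, and $k$ can be taken to depend only on $\dim M$ and (a fixed) $l$.

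The steps, in order: (i) Fix $l=1$, say, and record that the parametrix $Q_0 \in \Psi^m$ with $Q_0 A_0 = \Id + K_0$, $K_0 : L^2 \to H^l$, together with the operator norms $\|Q_0\|_{H^m\to L^2}$ and $\|K_0\|_{L^2\to H^{-l}} = \|K_0\|_{L^2\to H^l}\cdot(\text{cpt})$, depends continuously — indeed, is constructed by finitely many algebraic and differentiation operations — on the symbol of $A_0$ in the $p_k$-norm, for suitable $k=k(\dim M)$. (ii) Deduce that for $A$ with $p_k(A-A_0)$ small, the \emph{same} symbolic recipe applied to $A$ produces $Q$ with $QA = \Id + K$, and $\|Q - Q_0\|_{H^m\to L^2} \to 0$, $\|K - K_0\|_{L^2\to H^{-l}}\to 0$ as $p_k(A-A_0)\to 0$; here ellipticity of $A$ is automatic once $p_k(A-A_0)$ is small enough, since the principal symbol stays bounded away from zero uniformly on the cosphere bundle. (iii) From $Qf = A^{-1}(\cdots)$... more precisely, write $\|f\|_{L^2} \le \|Q\|\,\|Af\|_{H^m} + \|Kf\|_{H^{-l}}$, uniformly in such $A$, which is the analogue of \r{28a} with an $A$-independent constant. (iv) Finally invoke Proposition~\ref{pr2F}(b): since $A_0$ is injective on $\mathcal{L}$ and upper semi-Fredholm (being elliptic), \r{29} holds for $A_0$, and (b) of that proposition — whose proof is exactly the three-term inequality \r{210a} using that $A\mapsto A$ is close to $A_0$ as a bounded operator (which follows from (ii), since $\|A - A_0\|_{L^2\to H^m}$ is dominated by $p_k(A-A_0)$) — upgrades this to the uniform estimate \r{29} for all $A$ near $A_0$ in $p_k$. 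The constant $C$ is then controlled by $\|Q_0\|$, a fixed compactness constant for $L^2\hookrightarrow H^{-l}$, and the stability constant of $A_0$ on $\mathcal{L}$, none of which depend on $A$.

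The main obstacle is step (ii): one must check that the parametrix construction is genuinely \emph{locally Lipschitz (or at least continuous)} as a map from the $p_k$-ball around the symbol of $A_0$ into $\Psi^m$ equipped with a norm strong enough to bound the relevant operator norms $H^m\to L^2$ and $L^2\to H^{-l}$. This is where the precise value of $k$ is pinned down — one needs enough derivatives of the symbol to (a) invert the principal symbol smoothly and estimate $\partial^\gamma$ of the inverse, (b) carry out the finitely many steps of the transport-type recursion for the lower-order symbols, and (c) apply a quantitative $L^2$-boundedness theorem (Calderón–Vaillancourt) to bound the resulting operators by finitely many symbol seminorms. All of this is routine in elliptic \PDO\ theory, but making the dependence on $A$ explicit and uniform is the one place care is required; everything else is a direct appeal to Propositions~\ref{pr21}, \ref{pr2F}, and \ref{pr2}.
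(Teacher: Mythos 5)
Your step (iv) is exactly the paper's proof: the whole argument there consists of the observation that $\Psi^{-m}$ with the seminorm $p_k$, $k\ge 2n+m+1$, embeds continuously into the bounded operators $L^2(M)\to H^m(M)$ (a Calder\'on--Vaillancourt-type bound, cited from the appendix of \cite{SU-IBSP}), so that $p_k$-closeness of $A$ to $A_0$ gives operator-norm closeness, after which Proposition~\ref{pr2F}(b) --- i.e.\ the three-term inequality \r{210a} --- yields \r{29} with a uniform constant. Where you diverge is in steps (i)--(iii): the quantitative stability of the parametrix construction, the ellipticity of the perturbed $A$, and the uniform analogue of \r{28a} are all unnecessary. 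Proposition~\ref{pr2F}(b) asks nothing of the perturbed operator beyond operator-norm proximity to $A_0$; in particular $A$ need not be elliptic, injective, or admit a parametrix for the uniform estimate to hold, since the estimate for $A_0$ (already secured by Proposition~\ref{pr2}(b)) absorbs the perturbation term once $\|A-A_0\|_{L^2\to H^m}\le \epsilon$ with $\epsilon$ small relative to the stability constant of $A_0$. Consequently the ``main obstacle'' you flag --- continuity of the symbolic parametrix recursion in the $p_k$-ball --- is not an obstacle at all; the only quantitative input needed is the symbol-seminorm-to-operator-norm bound, which you in any case invoke at the end of (iv). Your longer route would work if carried out, and it would additionally give a uniform parametrix and a uniform \r{28a} over the neighborhood (which could be of independent use), but for the statement as posed it buys nothing over the paper's two-line argument, and it is also where the dependence of $k$ on $\dim M$ (and $m$) would become hardest to track, whereas in the paper's route $k\ge 2n+m+1$ is explicit.
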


\begin{proof}
Note that $\Psi^{-m}$ equipped with the norm $p_k$, $k\ge 2n+m+1$, is a subspace of the space of linear bounded operators mapping $L^2(M)$ into $H^m(M)$, see the appendix in \cite{SU-IBSP}. Now the proposition follows from  Proposition~\ref{pr2F}(b). 
\end{proof}

\begin{remark} Proposition~\ref{pr2} can be applied to compact manifolds $M$ with boundary; then we require  the symbol to be defined and to satisfy the symbol estimates for $x$ in the closed $M$. One can then extend it as an elliptic symbol to some extension of $M$ without boundary, and treat $L^2(M)$ as $\mathcal{L}$. The same remark applies to Proposition~\ref{pr5} because on can choose the extension of the symbol in a way that is continuous in any fixed $p_k$ norm.
\end{remark}

Finally, we formulate a direct corollary of these statements.

\begin{corollary}  \label{cor1}
Let $A: L^2(M)\to \H$,  be a bounded linear operator, where $M$ is a compact manifold  without boundary, and $\H$ is a Hilbert space. 
Assume that $N=A^*A$ is an elliptic \PDO\ of order $-m\le0$. 
Assume also that $A$ is injective on a (closed) subspace $\mathcal{L}$ of $L^2(M)$.

(a)  Then $N : \mathcal{L} \to\mathcal{L}$ is onto, and 
\be{c1}
\|f\|_{L^2(M)} \le C\|Nf\|_{H^m(M)}, \quad \forall f\in \mathcal{L}.
\ee

(b) Estimate \r{c1} remains true with an uniform $C>0$ if $N$ is replaced by any other operator close enough to $N$ in the operator norm $ L^2(M)\to \H$, and in particular for all \PDO s in a small enough neighborhood of $A$ in the norm $p_k$, $k\gg1$. 
\end{corollary}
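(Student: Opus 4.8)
The plan is to obtain both parts of the corollary by assembling results already established in this section --- Lemma~\ref{lemma1} together with Propositions~\ref{pr21}, \ref{pr2}, \ref{pr2F} and \ref{pr5} --- the only genuinely new move being the passage from $A$ to the self-adjoint operator $N=A^*A$.

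For part (a) I would argue in three steps. First, $\mathcal L\subset L^2(M)$ is closed and $A$ is injective on $\mathcal L$, so Lemma~\ref{lemma1} gives that $N$, and equally $\Pi_{\mathcal L}N|_{\mathcal L}$, is injective on $\mathcal L$. Second, $N$ is by hypothesis an elliptic element of $\Psi^{-m}(M)$ with $M$ closed, so Proposition~\ref{pr2}(b) applies verbatim and yields the estimate \r{c1}. Third, by Proposition~\ref{pr21} this estimate says precisely that $N|_{\mathcal L}$ is injective with closed range; since $N=N^{*}$ on $L^2(M)$ the operator $\Pi_{\mathcal L}N|_{\mathcal L}$ is self-adjoint on the Hilbert space $\mathcal L$, so $\overline{\Ran(\Pi_{\mathcal L}N|_{\mathcal L})}=(\Ker(\Pi_{\mathcal L}N|_{\mathcal L}))^{\perp}=\mathcal L$, and together with the closed-range property this gives the surjectivity assertion (the target being read, as in \r{c1}, with its $H^m(M)$ topology).

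For part (b) I would first let $\tilde A$ be a perturbation of $A$ small in the operator norm $L^2(M)\to\H$; then $\tilde N-N=\tilde A^{*}(\tilde A-A)+(\tilde A^{*}-A^{*})A$ is correspondingly small, and the argument behind Proposition~\ref{pr2F}(b) goes through: applying \r{210a} with $N_{q_0}=N$, $N_q=\tilde N$ absorbs $\|(\tilde N-N)f\|$ into the left-hand side as soon as $\|\tilde N-N\|<1/(2C)$, so \r{c1} survives with constant $2C$. For the sharper ``in particular'' clause --- $N$ replaced by a \PDO\ in a $p_k$-neighbourhood --- I would simply invoke Proposition~\ref{pr5}: for $k$ large depending only on $\dim M$, the parametrix underlying Proposition~\ref{pr2} varies continuously in the $p_k$-norm, so the constant stays uniform.

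I do not expect a serious analytic obstacle here; everything reduces to the cited statements. The only points needing care are bookkeeping. One is the exact meaning of ``$N:\mathcal L\to\mathcal L$ is onto'': for $m>0$, $N$ is compact on $L^2(M)$, hence cannot be surjective in the $L^2$ topology when $\dim\mathcal L=\infty$, so the surjectivity has to be read in the target topology of \r{c1}. The other is that in (b) the perturbation must be measured in a topology strong enough to control $\tilde N-N$ where it is used --- operator norm into $H^m(M)$, or $p_k$ on the symbol --- which is exactly why the $p_k$-version (Proposition~\ref{pr5}) is the robust formulation and why the perturbed $\tilde N$ should again be an elliptic \PDO\ for the estimate to be meaningful.
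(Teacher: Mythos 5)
Your proof is correct and takes essentially the same route as the paper's own one-line argument: the estimate \r{c1} comes from Lemma~\ref{lemma1} combined with Proposition~\ref{pr2}(b), part (b) from Proposition~\ref{pr2F}(b) and Proposition~\ref{pr5}, and the surjectivity from injectivity plus self-adjointness (trivial cokernel) together with the closed range coming from ellipticity/Fredholmness. The two bookkeeping caveats you raise --- that for $m>0$ the operator $\Pi_{\mathcal L}N|_{\mathcal L}$ is compact on $L^2$ so the ``onto'' statement must be read in the appropriate topology, and that in (b) the perturbation must be controlled in a norm strong enough to reach $H^m$ (hence the $p_k$ formulation) --- are points the paper's proof passes over silently, so including them is a refinement of, not a departure from, the paper's argument.
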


\begin{proof}
The only statement left to prove is that $N : \mathcal{L} \to\mathcal{L}$ is onto. This follows from the fact that $N$ is injective and self-adjoint, hence it has a trivial cokernel. Next, since $N$ is Fredholm, its range is closed.
\end{proof}

In particular, by Lemma~\ref{lemma1}, Corollary~\ref{cor1}(b) implies that $A$ remains injective under a small perturbation  (which also follows from the fact that $A|_{\mathcal{L}}$ is upper semi-Fredholm). Also, \r{c1} implies an estimate of the kind $\|f\|\le C\|Af\|_*$ with a suitable norm $\|\cdot\|_*$. 

Note that Corollary~\ref{cor1} can be applied to manifolds with boundary as in the example below. 

\begin{example}
We return to Example~\ref{ex2}. Let $\Omega_1\Supset \Omega$ as before. Assume that $N_w= I_w^*I_w$ is an elliptic \PDO\ (of order $-1$) in $\Omega_1$. To apply Corollary~\ref{cor1} directly, one can embed $\Omega_1$ into a compact manifold and extend $N_w$ as an elliptic operator there in a way independent of $w$. Alternatively, on can work with the parametrix in $\Omega_1$ and eventually restrict to $\bar\Omega$. By \r{26}, the principal symbol of $N_2$ is given by (see \cite{SU-Duke, SU-JAMS, FSU} derivation in even more general cases)
\[
\sigma_{\rm p}(N_w)(x,\xi) = 2\pi \int_{S^{n-1}} |w(x,\theta)|^2 \delta(\xi\cdot\theta)\, \d \theta,
\]
where $\delta$ is the Dirac delta function, and clearly, $\sigma_{\rm p}(N_w)$ is homogeneous in $\xi$ of order $-1$. The singularity at $\xi=0$ can be cut-off. Since $w$ is at least continuous, we see that the ellipticity assumption is equivalent to the following:
\be{211}
\forall (x,\zeta)\in \Omega\times S^{n-1}, \; \exists \theta\in S^{n-1}, \;\theta\perp\zeta; \;  \mbox{so that}\; w(x,\theta)\not=0.
\ee
In invariant terms, $(x,\xi)\in S^*\Omega$, while $(x,\theta)\in S\Omega$. Another way to express the same is to say that
\be{212}
N^*(\supp^0 w) \supset T^*\Omega,
\ee
where $\supp^0 w$ is the open set where $w\not=0$. 
Under this condition, using the continuity of $w$ in $S\bar\Omega \cong\bar \Omega\times S^{n-1}$, one can get the same with $\Omega$ replaced by $\Omega_1$. By Proposition~\ref{pr2}, we then get
\be{213}
\|f\|_{L^2(\Omega)}  \le C \|N_w f\|_{H^{1}(\Omega_1)},
\ee
under the assumption that $I_w$ is injective on $L^2(\Omega)$. Since $N_w$ is elliptic, it is enough to assume only that $I_w$ is injective on $C_0^\infty(\Omega)$. Without that assumption, one has \r{28a}; and \r{213} on $(\Ker I_w)^\perp$. 

To analyze the continuous dependence on $w$, let us note first that given $k\ge0$, one can construct a bounded extension operator extending functions in $C^k(\bar\Omega)$ to $C^k(\bar\Omega_1)$. With the aid of this operator, one can map a neighborhood of some $w_0\in C^k(\bar\Omega)$ to some neighborhood of an extended $w_0$ in $C^k(\Omega_1)$. Then we can apply Corollary~\ref{cor1} to get the following.

\begin{theorem} \label{thm2}
Let $w\in C^1(\bar\Omega\times S^{n-1})$ satisfy \r{211}. Assume that $I_w$ is injective on $C^\infty_0(\Omega)$.  
Then $N_w :L^2(\Omega) \to L^2(\Omega)$ is onto, and  there exists a constant $C>0$ so that
\be{214}
\|f\|_{L^2(\Omega)}  \le C\|N_wf\|_{L^2(\Omega_1)}, \quad \forall f\in L^2(\Omega).
\ee
Moreover, \r{214} remains true under small $C^1(\bar\Omega)$ perturbations of $w$. 
\end{theorem}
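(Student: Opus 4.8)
The plan is to deduce Theorem~\ref{thm2} directly from Corollary~\ref{cor1}, the only real work being to massage the ``manifold with boundary'' setup into the ``closed manifold'' hypothesis of the corollary and to verify that ellipticity on $\bar\Omega$ propagates to a slightly larger domain. First I would fix a strictly convex $\Omega_1\Supset\Omega$ with smooth boundary as in Example~\ref{ex2}, and recall from there that $N_w=I_w^*I_w$, given by the explicit kernel formula \r{25}--\r{26}, maps $L^2(\Omega)\to L^2(\Omega_1)$ boundedly, and is a \PDO\ of order $-1$ in the interior of $\Omega_1$ when $w\in C^1$ (here invoking the operators-with-singular-kernels result cited as \cite{FSU}). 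Its principal symbol is the one displayed before \r{211}, and the ellipticity hypothesis \r{211} says exactly that this symbol is nonvanishing on $S^*\Omega$ away from $\xi=0$.

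Next I would upgrade ellipticity from $\bar\Omega$ to $\bar\Omega_1$: since $w$ is continuous on $S\bar\Omega$ and \r{211} holds at every $(x,\zeta)$ with $x\in\bar\Omega$, a compactness/continuity argument (the set of $(x,\theta)$ with $w(x,\theta)\ne0$ is open, and at each $x\in\bar\Omega$ it contains a full great subsphere transverse to every $\zeta$) shows \r{211} continues to hold for $x$ in a neighborhood, hence on $\bar\Omega_1$ after shrinking $\Omega_1$ if necessary; so $N_w$ is elliptic of order $-1$ on $\bar\Omega_1$. I would then embed $\Omega_1$ into a compact boundaryless manifold $M$ and extend the full symbol of $N_w$ to an elliptic symbol of order $-1$ on all of $M$, in a manner depending continuously on $w$ in the relevant $p_k$ norm (this is the content of the Remark following Proposition~\ref{pr5} and the extension-operator discussion preceding the theorem: a bounded $C^k(\bar\Omega)\to C^k(\bar\Omega_1)$ extension pushes a $C^1$ neighborhood of $w_0$ into a $C^1$ neighborhood of its extension, hence a $p_k$-neighborhood of the extended operator). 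Taking $\mathcal{L}=L^2(\Omega)\subset L^2(M)$, and noting $I_w$ injective on $C_0^\infty(\Omega)$ together with ellipticity forces injectivity of $N_w$ on $L^2(\Omega)$ (ellipticity: $N_wf=0$ with $f\in L^2$ implies $f\in C^\infty$, hence $f\in C_0^\infty(\Omega)$ after noting $\supp f\subset\bar\Omega$ and elliptic regularity up to/through $\bo$ — here one uses that $N_wf$ vanishes to infinite order, or more simply applies Lemma~\ref{lemma1} after checking $I_w f=0$), the hypotheses of Corollary~\ref{cor1} are met with $m=1$.

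Corollary~\ref{cor1}(a) then gives $N_w:L^2(\Omega)\to L^2(\Omega)$ onto and the estimate $\|f\|_{L^2(\Omega)}\le C\|N_wf\|_{H^1(\Omega_1)}$; since $\|N_wf\|_{H^1(\Omega_1)}$ can be traded against $\|N_wf\|_{L^2(\Omega_1)}$ via the parametrix exactly as in \r{213} (or, more carefully, one keeps the $H^1$ norm and then uses boundedness $L^2(\Omega)\to H^1(\Omega_1)$ together with the Fredholm/closed-range structure to pass to the $L^2$ norm of $N_wf$ — this is precisely what \r{29a}$\Rightarrow$\r{214} amounts to), we obtain \r{214}. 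For the stability under small $C^1(\bar\Omega)$ perturbations, I would apply Corollary~\ref{cor1}(b): a small $C^1$ perturbation of $w$ produces, through the kernel formula \r{26} and the extension construction, a small perturbation of $N_w$ both in operator norm $L^2(\Omega)\to L^2(\Omega_1)$ and in the $p_k$ norm on $M$, so the uniform constant in \r{214} persists.

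The main obstacle is the boundary: Corollary~\ref{cor1} is stated for closed manifolds, and one must be careful that the extension of $N_w$ off $\Omega_1$ is (i) genuinely elliptic everywhere on $M$, (ii) continuous in $w$ in the topology ($p_k$ for suitable $k$) in which Corollary~\ref{cor1}(b) demands closeness, and (iii) compatible with treating $L^2(\Omega)$ as the distinguished subspace $\mathcal{L}$ — in particular that injectivity of $I_w$ (a priori only on $C_0^\infty(\Omega)$) transfers to injectivity of the extended $N_w$ on $\mathcal{L}=L^2(\Omega)$, which is where ellipticity (hence hypoellipticity and the $C^\infty$ regularity of any $L^2$ null solution supported in $\bar\Omega$) does the work. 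All three points are handled by the remarks already in the text following Propositions~\ref{pr2} and \ref{pr5}, so the proof is genuinely short; the only mild subtlety to spell out is the compactness argument upgrading \r{211} from $\bar\Omega$ to a neighborhood, and the verification that $f\in L^2(\Omega)$ with $N_wf=0$ lies in $C_0^\infty(\Omega)$ so that Lemma~\ref{lemma1} applies.
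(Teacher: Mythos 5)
Your route is the paper's route. Everything you do before invoking the corollary --- ellipticity of $N_w$ read off from \r{211}, its propagation from $\bar\Omega$ to $\bar\Omega_1$ by continuity/compactness, the embedding of $\Omega_1$ into a closed manifold with a $w$-independent (and $p_k$-continuous) elliptic extension of the symbol, the transfer of injectivity from $I_w$ on $C_0^\infty(\Omega)$ to $N_w$ on $\mathcal{L}=L^2(\Omega)$ via elliptic regularity and Lemma~\ref{lemma1}, and the bounded extension operator $C^1(\bar\Omega)\to C^1(\bar\Omega_1)$ feeding Corollary~\ref{cor1}(b) for the perturbation statement --- is exactly the preparation carried out in Example~\ref{ex2} and the remarks after Propositions~\ref{pr2} and \ref{pr5}; the paper's proof of the theorem is then the single sentence ``apply Corollary~\ref{cor1}''. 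So on all the genuine content your proposal and the paper coincide, including your (correct) flagging of the two boundary subtleties.

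The one place you go beyond the paper is the attempt to convert the $H^1(\Omega_1)$ bound that Corollary~\ref{cor1} (equivalently \r{213}) delivers into the $L^2(\Omega_1)$ bound literally printed in \r{214}, ``via the parametrix'' or ``the Fredholm/closed-range structure''. That step cannot be carried out: for $w\in C^1$ the operator $N_w$ is bounded from $L^2(\Omega)$ into $H^1(\Omega_1)$, hence compact as a map into $L^2(\Omega_1)$, and an injective compact operator on an infinite-dimensional space can never satisfy $\|f\|_{L^2(\Omega)}\le C\|N_wf\|_{L^2(\Omega_1)}$ --- such an estimate forces closed range, and a compact operator with closed range has finite-dimensional range; this is exactly the obstruction the paper itself spells out right after Proposition~\ref{pr21}. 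For the same reason $f\mapsto N_wf|_{\Omega}$ lands in $H^1(\Omega)$ and cannot be onto $L^2(\Omega)$. In other words, what the method (yours and the paper's) actually proves is \r{213}, i.e.\ \r{214} with the $H^{1}(\Omega_1)$ norm on the right (which is also the form of the result in \cite{FSU}); the $L^2(\Omega_1)$ norm in \r{214} is best read as a misprint, and your proposed bridge --- in effect bounding $\|N_wf\|_{H^1(\Omega_1)}$ by $\|N_wf\|_{L^2(\Omega_1)}$ --- runs the boundedness estimate backwards and is false. Stop at the $H^1$ estimate; the rest of your argument is sound and is what the paper does.
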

\end{example}

Without the injectivity assumption, \r{214} holds on $(\Ker I_w|_{C_0^\infty(\Omega)})^\perp$. Theorem~\ref{thm2} is a weaker version of the results in \cite{FSU} and is presented here just to illustrate the method. Note that $w\in C^k$ with $k=1$ is not enough to guarantee  that operator convergence in the $p_k(N)$ norm implies convergence in the norm $\|N\|_{L^2(\Omega) \to H^1(\Omega_1)}$, viewing $N=N_w$ as a \PDO. On the other hand, $N_w$ is also an operator with singular kernel, see \r{26}, and $w\in C^1$ is enough for $L^2\to H^1$ continuity. We refer to \cite{FSU} for details.

We show finally that for an injective \PDO, ellipticity is a necessary and sufficient condition for the estimate \r{c1} in Corollary~\ref{cor1} to hold.

\begin{proposition}  \label{pr6}  \ 

(a) 
Let $N$ be a \PDO\ of order $-m\le0$ on a compact manifold $M$. Then \r{29a} holds for any $f\in L^2(M)$ if and only if $N$ is injective and elliptic.

(b) Let $M$ be a compact manifold with boundary and let $M_1$ be another compact manifold so that $\Mint_1\supset M$. Assume that $N$ is a \PDO\ of order $-m\le0$ defined on some neighborhood of $M$. Then 
\be{c2}
\|f\|_{L^2(M)} \le C\|Nf\|_{H^m(M_1)}, \quad \forall f\in L^2(M)
\ee
if and only if $N$ is injective on $L^2(M)$ and elliptic on $\pi^{-1}(M)$;  more precisely,  
\[
\inf_{x\in M}  \liminf_{|\xi|\to\infty}|\xi|^m |p_0(x,\xi)|>1/C,
\]
where $p_0$ is the principal symbol of $N$.
\end{proposition}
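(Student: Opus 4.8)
The plan is to prove both directions of each equivalence, leaning on the preceding propositions for the "if" parts and on explicit test-function constructions for the "only if" parts.

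For part (a), the "if" direction is immediate: if $N$ is injective and elliptic, then Proposition~\ref{pr2}(b) applied with $\mathcal{L}=L^2(M)$ gives exactly \r{29a}. For the "only if" direction, suppose \r{29a} holds for all $f\in L^2(M)$. Injectivity of $N$ is clear since $Nf=0$ forces $\|f\|_{L^2}\le 0$. For ellipticity, I would argue by contradiction: if $N$ fails to be elliptic, there is a point $(x_0,\xi_0)\in T^*M\setminus 0$ at which the principal symbol vanishes, and more relevantly one can find sequences of highly oscillatory test functions $f_j$ (coherent states / WKB wave packets concentrated at $x_0$ in frequency direction $\xi_0$, with $\|f_j\|_{L^2}=1$) for which $\|N f_j\|_{H^m(M)}\to 0$. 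The construction is the standard one: take $f_j(x)=\lambda_j^{n/4}\chi(\lambda_j^{1/2}(x-x_0))e^{\i\lambda_j \xi_0\cdot x}$ in a coordinate patch, with $\lambda_j\to\infty$; then $N f_j$ has $H^m$-norm governed by the symbol of $N$ near $(x_0,\xi_0)$ times $\lambda_j^{-m}$ times the symbol, and since the principal symbol $p_0$ vanishes at $(x_0,\xi_0)$ while lower-order terms contribute $o(\lambda_j^{-m})$ in $L^2$, one gets $\lambda_j^m\|N f_j\|_{L^2}\to 0$, hence $\|N f_j\|_{H^m}\to 0$, contradicting \r{29a}. The point requiring care is quantifying the symbol calculus remainder so that the leading term genuinely controls $\|Nf_j\|_{H^m}$; this is routine but is where the work lies.

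For part (b), the "if" direction follows the pattern of Proposition~\ref{pr2} combined with the boundary remarks: extend the symbol of $N$ to an elliptic \PDO\ on a closed manifold containing $M_1$, construct a parametrix $Q$ with $QN=\Id+K$, $K$ smoothing, so that $\|f\|_{L^2(M)}\le C(\|Nf\|_{H^m(M_1)}+\|f\|_{H^{-l}(M)})$ for $f$ supported in $M$; then the compactness of $L^2(M)\hookrightarrow H^{-l}(M)$ together with injectivity of $N$ on $L^2(M)$ (via the usual Fredholm alternative / Proposition~\ref{pr2F} argument, upgrading upper semi-Fredholmness to the clean estimate) removes the lower-order term. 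Here the hypothesis is precisely that $\liminf_{|\xi|\to\infty}|\xi|^m|p_0(x,\xi)|>1/C$ uniformly in $x\in M$, which is exactly what is needed to build the parametrix over a neighborhood of $M$; one should note that ellipticity is only needed over $\pi^{-1}(M)$ since $f$ is supported there, and the parametrix identity $QNf=f+Kf$ only uses the symbol on that set.

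For the "only if" direction of (b): injectivity on $L^2(M)$ is again immediate from \r{c2}. For the ellipticity-with-the-quantitative-bound statement, I would again use wave packets $f_j$ concentrated in frequency at a point $(x_0,\xi_0)$ with $x_0\in M$; since $f_j$ can be taken supported in a small ball around $x_0$ inside $M$, \r{c2} gives $\|f_j\|_{L^2(M)}\le C\|Nf_j\|_{H^m(M_1)}$, and the right side is asymptotically $C\,|\xi_j|^{-m}|p_0(x_0,\xi_0/|\xi_0|)|\,|\xi_j|^m\cdot(\text{unit }L^2)$ up to $o(1)$—so passing to the limit forces $\liminf_{|\xi|\to\infty}|\xi|^m|p_0(x_0,\xi)|\ge 1/C$ in the direction $\xi_0$, and taking the infimum over $x_0\in M$ and all directions yields the stated inequality. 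The main obstacle throughout is the bookkeeping in the wave-packet computation — making sure the principal-symbol term dominates the full $H^m(M_1)$ (respectively $H^m(M)$) norm of $Nf_j$ and that the constant $1/C$ emerging from the test functions matches the $1/C$ in \r{c2}; everything else is an assembly of results already proved above.
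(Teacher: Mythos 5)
Your proposal is correct and follows essentially the same route as the paper: the ``only if'' directions are proved by testing the estimate on wave packets concentrated at a point $(x_0,\xi_0)$ (the paper uses Gaussian coherent states $f_\lambda(x)=(\lambda/\pi)^{n/4}e^{\i\lambda x\cdot\xi_0-\lambda|x-x_0|^2/2}$, cut off inside $M$ for part (b), and derives the quantitative bound $|p(x,\xi)|\ge 1/C-C_1|\xi|^{-n/4}$ directly rather than by contradiction), while the ``if'' directions come from the parametrix and Fredholm results of Proposition~\ref{pr2}, Proposition~\ref{pr2F} and Corollary~\ref{cor1}. The bookkeeping you flag (showing the symbol value at $(x_0,\lambda\xi_0)$ dominates $\|Nf_\lambda\|$, and the loss of uniformity of the error constants as $x_0$ approaches $\partial M$ in (b)) is exactly what the paper's computation carries out, so there is no substantive difference.
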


\begin{proof}  

To prove (a), we will work locally near a fixed point on $M$. In fixed local coordinates near $x_0$, let $p(x,\xi)$ be the symbol of $N$, multiplied by a standard cut-off supported near some $x_0$. We will study $p(x,D)$ in $\R^n$ first.  Fix $\xi_0\not=0$ and apply $N$ to the normalized ``coherent state'' 
\[
f_\lambda(x) = (\lambda/\pi)^{n/4} e^{\i\lambda  x\cdot\xi_0-\lambda|x-x_0|^2/2}.
\]
Then
\[
\hat f_\lambda(\lambda\xi) = (\lambda/\pi)^{n/4} (2\pi/\lambda)^{n/2}e^{-\i \lambda(\xi-\xi_0)\cdot x_0  -{\lambda}  |\xi-\xi_0|^2/2}.
\]
Then we get
\[
[p(x,D) f_\lambda] (x) = e^{\i \lambda x_0\cdot\xi_0}    (\lambda/\pi)^{n/4} 
 (2\pi\lambda)^{-n/2}  \int e^{\i \lambda (x- x_0)\cdot\xi- \lambda |\xi-\xi_0|^2/2}   p(x,\lambda\xi)\,  \d\lambda\xi.
\]
For any $C>0$, the contribution to the integral above coming from integrating over $|\xi-\xi_0|>1/C$ is $O(\lambda^{-N})$, $\forall N>0$. To estimate the effect of replacing $p(x,\lambda\eta)$ by $p(x,\lambda\xi_0)$ in the integral above,  notice first that $p(x,\lambda\xi) -p(x,\lambda\xi_0) = \zeta(x,\xi,\xi_0,\lambda)\cdot(\xi-\xi_0)$ with $\zeta=O(1)$, therefore 
\be{pf}
\int_{|\xi-\xi_0|<C}|\xi-\xi_0|e^{-\lambda|\xi-\xi_0|^2/2}  \d\xi =  O(\lambda^{-n})
\ee
(pass to polar coordinates $r\omega=\xi-\xi_0$). 
Thus we get
\[
\begin{split}
[p(x,D)& f_\lambda] (x) \\
&= e^{\i \lambda x_0\cdot\xi_0} (\lambda/2\pi )^{n/2} (\lambda/\pi)^{n/4}
   \left[\int e^{\i \lambda (x-  x_0)\cdot\xi-\lambda |\xi-\xi_0|^2/2}   p(x,\lambda\xi_0)\,  \d\xi +O(\lambda^{-n}) \right] \\
&= e^{\i \lambda x_0\cdot\xi_0}    (\lambda/2\pi)^{n/2}   (\lambda/\pi)^{n/4} \left[ p(x,\lambda\xi_0)
   (2\pi/\lambda)^{n/2} e^{\i\lambda(x-x_0)\cdot\xi_0 - \lambda |x-x_0|^2/2}+ O(\lambda^{-n} )\right]\\
&=  p(x,\lambda\xi_0) f_\lambda(x) + O(\lambda^{-n/4}) .
\end{split}
\]
Assume now that \r{29a} holds. It is enough to consider the case $m=0$. Then, choosing $f=f_\lambda$, we get
\be{ell1}
1/C \le \|p(\cdot,\lambda\xi_0)f_\lambda(\cdot)\| + O(\lambda^{-n/4}).
\ee
The function $\phi_\lambda(x-x_0) = |f_\lambda(x)|^2 = (\lambda/\pi)^{n/2}e^{-\lambda|x-x_0|^2}$ is normalized so that it has $L^1$ norm equal to $1$. Therefore,
\be{ell2}
\|p(\cdot,\lambda\xi_0)f_\lambda(\cdot)\|^2 = \int \phi_\lambda(x-x_0)|p(x,\lambda\xi_0)|^2\, \d x  =|p(x_0,\lambda\xi_0)|^2  +O(\lambda^{-n})
\ee
by the argument already used above, see \r{pf}. Relations \r{ell1} and \r{ell2} show that $p$ is elliptic at $(x_0,\xi_0)$. More precisely, we get
\be{215}
|p(x,\xi)|\ge 1/C - C_1|\xi|^{-n/4},  \quad \forall (x,\xi)\in T^*M,
\ee
where $C>0$ is the constant $C$ in \r{29a}, and $C_1>0$ depends on $p$ only. 

To relate $N$ and $p(x,D)$, note that in a fixed coordinates system near some $x_0$, $R= N-p(x,D)$ is smoothing when acting on distributions supported near $x_0$. Consider then $(N-p(x,D)) \chi f_\lambda$, where $\chi$ is a standard cut-off near $x_0$. Then 
\[
p(x,D)f_\lambda = p(x,D)(1-\chi)f_\lambda+ p(x,D)\chi f_\lambda = p(x,D)(1-\chi)f_\lambda+(N-R)\chi f_\lambda.
\]
Now, $(1-\chi)f_\lambda= O(e^{-\lambda/C})$, $N\chi f_\lambda= O(\lambda^{-\infty})$ as can be seen by integrating by parts and using the smoothness of the kernel of $R$. Therefore, $N\chi f_\lambda = p(x,D) f_\lambda + O(\lambda^{-\infty})$, and this completes the proof of  (a). 

To prove (b), we proceed in the same way in a neighborhood of any $x_0\in \Mint$ but we apply $p(x,D)$ to $\chi f_\lambda$ instead to $f_\lambda$, where $\chi$ is a standard cut-off near $x_0$ supported in $M$. As $x_0$ gets closer to $\bo$, the derivatives of $\chi$ will have larger norms. This will affect the constant $C_1$ in \r{215} that nay not be uniform in $x$. The conclusion (b) is still true, however. Note that if $N$ has a polyhomogeneous symbol, then the sub-principal symbol must be uniformly bounded for $x\in M$,  because we assume that $N$ is a \PDO\ in the larger $M_1$. Then $C_1$ is actually uniformly bounded for $x\in M$.
\end{proof}

\subsection{Non-sharp linear stability estimates} 
In view of the analysis in Section~\ref{sec_IP}, we are also interested whether an estimate of the type \r{13}, weaker than \r{c1} can still be proven when $N$ is not an elliptic \PDO. In other words, what conditions would imply \r{c1} in different $H^s$ spaces? Ellipticity of $N$ is not necessary anymore, and for certain classes of hypoelliptic operators, one can still have \r{c1} with a loss of one derivative, for example.  We are not going to review those classes of operators. Without a proof, we will only mention that if the full symbol of $N$ vanishes on some open conic set, \r{c1} cannot hold in any Sobolev spaces. This is well known and used in tomography to determine subsets of lines/curves on which the X-ray transform can or cannot determine the function we integrate in a stable way. In particular, in  Example~\ref{ex2}, if $T^*\setminus N^*(\supp w)$ contains an open conic set, see \r{212}, then no $H^{s_1}\to H^{s_2}$ stability estimate of the type \r{213} is possible.

\subsection{Injectivity of the differential $A_{f_0}$} In this section so far we were trying to find sufficient conditions that would guarantee a stability estimate under the assumption that the differential $A_{f_0}$ is known to be continuous. It is not the scope of this work to study in detail the typical approaches to prove that the differential is continuous. We will only briefly mention some of them: smallness of the coefficients is often used when the the constant coefficient case is easy to study directly. Another technique is the use of the Analytic Fredholm Theorem \cite{Reed-Simon1} that, when can be applied, gives injectivity of $A_{f}$ for generic $f$'s. When the unknown coefficient is  in the principal symbol, as in the boundary rigidity question and related hyperbolic inverse problems \cite{SU-IMRN}, the calculus of the analytic \PDO s can be applied. 

\section{An example: The inverse back-scattering problem for the acoustic wave equation}
\label{sec_4}

Consider the acoustic wave  equation
\be{e1}
\left(\partial_t^2 - c^2(x)\Delta\right)u=0, \quad t\in\R, x\in \R^3
\ee
with a variable speed $c(x)>0$ that equals $1$ for large $x$, i.e., 
\be{e2}
c(x)=1 \quad \mbox{for $|x|>\rho$}
\ee
with some $\rho>0$. Let $S(s,\omega,\theta)$, where $(s,\omega,\theta)\in \R\times S^2\times S^2)$, be the scattering kernel associated with $c$, see below. In \cite{SU-IBSP} we showed that if $c$ is close enough to $1$ in the $C^9$ norm, then the back-scattering kernel $S(s,-\theta,\theta)$ determines $c$ uniquely.

We recall some facts about the time-dependent scattering theory for \r{e1}, see \cite{SU-IBSP}. For simplicity, we  work in space dimension $n=3$. Given $\theta\in S^2$, we denote by $u(t,x,\theta)$ the {\em distorted plane wave}  defined as  the solution of \r{e1} satisfying
\be{e4}
u|_{t\ll0}=\delta(t-x\cdot\theta).
\ee
Then we set $u_{\rm sc}= u-\delta(t-x\cdot\theta)$. 
In the Lax-Phillips scattering theory \cite{LP}, see also ..., The {\em asymptotic wave profile} $u^\sharp$ of $u$ is defined by 
\[
u_{\rm sc}^\sharp(s,\omega,\theta) = \lim_{t\to\infty}(t+s)\partial_t u_{\rm sc}(t,(t+s)\omega,\theta),
\]
where $s\in \R$, $\omega\in S^2$. 
The {\em scattering kernel} $S(s,\omega,\theta)$ is given by
\[
S(s,\omega,\theta) = -\frac1{2\pi}u_{\rm sc}^\sharp(s,\omega,\theta).
\]
Then $S(s'-s,\omega',\omega)$ is the Schwartz kernel of $\mathcal{R}(\mathcal{S}-\Id)\mathcal{R}^{-1}$, where $\mathcal{S}$ is the scattering operator, and $\mathcal{R}$ is the Lax-Phillips translation representation.

One can formulate this problem with stationary data in mind. The scattering amplitude $a$, that is also defined through the stationary scattering theory, is essentially just a Fourier transform (a stable and invertible operation) of $S$ in the $s$ variable, more precisely,
\be{e3}
\frac{\i\lambda}{2\pi}  a(\lambda,\omega,\theta) = \int e^{-\i s\lambda}S(s,\omega,\theta)\, \d s,
\ee
see \cite{SU-IBSP}. 

Set $\omega=-\theta$. Then 
\be{e5c}
\supp S(\cdot,-\theta,\theta) \subset (-\infty,2\rho]. 
\ee
Let $P_\theta$ be the plane $x\cdot\theta=-\rho$, and let ``$\mbox{dist}$'' be the distance function in the metric $c^{-2}dx^2$.  
Fix $T$ so that
\be{e9}
T>2\max\{\mbox{dist}(P_\theta,x);\; x\in B(0,\rho), \, \theta\in S^2\}-2\rho,
\ee
where $B(0,\rho)$ is the ball with center $0$ and radius $\rho$, 
see also \r{e9'}. 
We use the information contained in $S$ for $s\ge -T$ only. We will denote by $h$  the Heaviside function (the characteristic function of $\R_+$).   Our data now is 
\be{e5a}
S_T(s,-\theta,\theta) := h(s+T)S(s,-\theta,\theta).
\ee
Instead of working with $S_T$, we will work with its stationary analog $a_T$ defined by
\be{e8}
a_T(\lambda,-\theta,\theta) := \frac{2\pi}{\i \lambda^3}\int e^{-\i s\lambda}S_T(s,\omega,\theta)\, \d s,  \quad \lambda>0.
\ee
The extra factor $\lambda^{-2}$ there, compared to \r{e3}, is there for convenience. Clearly, $\lambda^2a_T= \hat S_T* \hat\chi_T$, where the hat indicates Fourier transform w.r.t.\ $s$. Getting a stability estimate in terms of $a_T$ instead of $a$ is even stronger because it uses less information. Since $a_T$ is analytic in $\lambda$, we can throw away any finite interval without losing information. Our next theorem shows that we are not losing stability, either. We fix $\lambda_0\ge0$ and restrict $\lambda$ to the interval $\lambda\ge\lambda_0$. 

\begin{theorem}  \label{thm_e1} There exist $\eps>0$, $k\ge2$, $\mu\in (0,1)$ with the following property. For any two  $\tilde c$, $c$ satisfying \r{e2} and 
\be{et1}
\|c-1\|_{C^k}+\|\tilde c-1\|_{C^k}\le \eps,
\ee
and for any $\lambda_0\ge0$, we have 
\be{et2}
\|\tilde c-c\|_{L^\infty} \le C\left(\sup_{\lambda>\lambda_0, \, \theta\in S^2}  (1+\lambda) \big|(\tilde a_T-a_T)(\lambda,-\theta,\theta)\big|\right)^\mu,
\ee
where $T$ is fixed so that it satisfies \r{e9} w.r.t.\ both $\tilde c$ and $c$.
\end{theorem}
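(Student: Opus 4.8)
The plan is to recast the backscattering problem as the abstract inverse problem of Section~\ref{sec_IP} and to apply Theorem~\ref{thm1}(b). I would work with the index-of-refraction perturbation $f:=c^{-2}-1$, so that $f$ is supported in $B(0,\rho)$, $f_0:=0$ corresponds to $c\equiv1$, and $\|f\|_{C^k}$ and $\|f\|_{L^\infty}$ are comparable to $\|c-1\|_{C^k}$ and $\|c-1\|_{L^\infty}$ when $c$ is close to $1$; thus \r{et1} becomes a $C^k$-smallness of $f$ and the target \r{et2} becomes an $L^\infty$ bound on $f_1-f_2$. Let $\A$ send $f$ to the truncated backscattering data, $\A(f):=\bigl((\lambda,\theta)\mapsto a_T(\lambda,-\theta,\theta)\bigr)$ on $\{\lambda>\lambda_0\}\times S^2$; since a constant speed produces no scattered field, $\A(0)=0$. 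I would take $\B_1=L^\infty(B(0,\rho))$ (or $C^2(B(0,\rho))$ if the remainder estimate below should require it, which only strengthens the conclusion), $\B_1'=L^2(B(0,\rho))$, $\B_1''=H^{\sigma_1}(B(0,\rho))$, and $\mathcal{K}=C^k(B(0,\rho))$, with $\sigma_1\gg1$ and $k$ so large that $C^k\hookrightarrow H^{\sigma_1}$; on the data side let $\B_2$ be the norm $\sup_{\lambda>\lambda_0,\,\theta}(1+\lambda)|u|$ appearing in \r{et2}, and $\B_2'$, $\B_2''$ the heavier sup-norms with weights $(1+\lambda)^a$ and $(1+\lambda)^b$, $1\ll a\ll b$. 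Then $\B_2''\subset\B_2'\subset\B_2$ and $\B_1''\subset\B_1\subset\B_1'$ (the last inclusion because $B(0,\rho)$ has finite measure), and it remains to verify \r{12}, \r{13}, \r{int}.

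\textbf{Linearization and the quadratic remainder \r{12}.} The differential $A_0:=D\A(0)$ is the Born approximation: $u_{\rm sc}^{(1)}$ solves $(\partial_t^2-\Delta)u_{\rm sc}^{(1)}=-f\,\partial_t^2\delta(t-x\cdot\theta)$, and its asymptotic wave profile in the direction $-\theta$, via the Fourier slice theorem applied to the plane--Radon transform over $\{y\cdot\theta=-s/2\}$, works out to $A_0f(\lambda,\theta)=c_0\,\lambda^{-1}\,\widehat f(-2\lambda\theta)$. Because $\supp f\subset B(0,\rho)$, the Born backscattering kernel $S^{(1)}(\cdot,-\theta,\theta)$ is supported in $[-2\rho,2\rho]$, while \r{e9} forces $T>2\rho$, so the cut-off $h(s+T)$ in \r{e5a} leaves the linear part of the data untouched --- which is exactly why $T$ is chosen as in \r{e9}. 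The remainder $R(f):=\A(f)-A_0f$ gathers the multiply-scattered contributions; running the Neumann series for the Lippmann--Schwinger-type equation that governs the distorted plane wave, as in \cite{SU-IBSP}, yields $\|R(f)\|_{\B_2}\le C\|f\|_{\B_1}^2$ once $\|f\|_{C^k}$ is small, i.e.\ $\A$ is $C^{1,1}$ near $0$; the same estimate, together with the validity of \r{13} at speeds $C^k$-close to $1$ and the uniform boundedness of $A_f:\B_1''\to\B_2''$ there, holds in a $C^k$-neighbourhood of $1$. Establishing this family of estimates uniformly is the bulk of the analytic work and is essentially the content of \cite{SU-IBSP}; it is the step I expect to be the main obstacle.

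\textbf{Linear stability \r{13}.} Fix $\lambda_0>0$ (for $\lambda_0=0$ the norm $\|\cdot\|_{\B_2}$ is infinite on generic data because of the $\lambda^{-1}$ factor above, so \r{et2} then holds vacuously; in any case a smaller $\lambda_0$ is the weaker statement). By the slice identity, $\|A_0f\|_{\B_2'}\simeq\sup_{|\xi|>2\lambda_0}(1+|\xi|)^{a-1}|\widehat f(\xi)|$ (using $\{-2\lambda\theta:\lambda>\lambda_0,\ \theta\in S^2\}=\{|\xi|>2\lambda_0\}$ and $\lambda\ge\lambda_0>0$, with constants depending on $\lambda_0$), which for $a$ large controls $\|\widehat f\|_{L^2(|\xi|>2\lambda_0)}$ by H\"older's inequality, since $\int_{\R^3}(1+|\xi|)^{-2(a-1)}\,\d\xi<\infty$. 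To recover the low frequencies, write $f=g+h$ with $g=\mathcal{F}^{-1}[\widehat f\,\chi_{\{|\xi|>2\lambda_0\}}]$ and $h=\mathcal{F}^{-1}[\widehat f\,\chi_{\{|\xi|\le2\lambda_0\}}]$: then $\|g\|_{L^2}\le C\|A_0f\|_{\B_2'}$, and $h$ is band-limited to the fixed ball $\{|\xi|\le2\lambda_0\}$ and equals $-g$ outside $B(0,\rho)$ (since $f\equiv0$ there). A prolate-spheroidal/uncertainty estimate gives $\|u\|_{L^2(B(0,\rho))}\le q\,\|u\|_{L^2(\R^3)}$ with $q=q(\rho,\lambda_0)<1$ for every $u$ band-limited to $\{|\xi|\le2\lambda_0\}$; applied to $h$ this yields $\|h\|_{L^2}^2\le q^2\|h\|_{L^2}^2+\|g\|_{L^2}^2$, hence $\|f\|_{L^2}\le\bigl(1+(1-q^2)^{-1/2}\bigr)\|g\|_{L^2}\le C\|A_0f\|_{\B_2'}$. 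This is \r{13} with $\B_1'=L^2$; the constant depends on $\lambda_0$, but the H\"older exponent in the end does not --- which is what ``not losing stability'' after discarding $[0,\lambda_0]$ means here.

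\textbf{Interpolation \r{int} and conclusion.} On the $\B_1$ side, $\|h\|_{L^\infty(\R^3)}\le C\|h\|_{H^\sigma}$ for $\sigma>3/2$, combined with $\|h\|_{H^\sigma}\le\|h\|_{L^2}^{1-\sigma/\sigma_1}\|h\|_{H^{\sigma_1}}^{\sigma/\sigma_1}$, gives the second inequality in \r{int} with $\mu_1=1-\sigma/\sigma_1$, close to $1$ for $\sigma_1$ large. On the $\B_2$ side, the pointwise identity $(1+\lambda)^a|u|=\bigl[(1+\lambda)|u|\bigr]^{\mu_2}\bigl[(1+\lambda)^b|u|\bigr]^{1-\mu_2}$ with $a=\mu_2+b(1-\mu_2)$ gives the first inequality in \r{int} with $\mu_2=(b-a)/(b-1)$, close to $1$ once $b\gg a$; the membership $\A(f)-\A(f_0)\in\B_2''$, i.e.\ decay like $(1+\lambda)^{-b}$, follows from the $C^k$-smoothness of $f$ and the corresponding decay of $\widehat f$, for $k$ large. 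Hence $\mu_1\mu_2>1/2$ as required. Applying Theorem~\ref{thm1}(b) with $f_1=\tilde c^{-2}-1$ and $f_2=c^{-2}-1$ gives $\|f_1-f_2\|_{\B_1}\le C\|\A(f_1)-\A(f_2)\|_{\B_2}^{\mu_1\mu_2}$, which, passing from $c^{-2}$ back to $c$ and unwinding the definition of $\B_2$, is exactly \r{et2} with $\mu=\mu_1\mu_2$; the smallness condition \r{sm} is \r{et1}, and $k$ is the value fixed above.
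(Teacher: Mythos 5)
There is a genuine gap, and it sits exactly where you flag it. Your plan is the paper's plan in outline (parametrize by $c^{-2}$, take $\B_1=L^\infty$, $\B_1'=L^2$, $\B_1''=H^{s}$, $\mathcal{K}=C^k$, data in the $(1+\lambda)$-weighted sup norm, interpolate, invoke Theorem~\ref{thm1}), but the two inputs that Theorem~\ref{thm1}(b) actually needs --- the quadratic remainder estimate \r{12} and, above all, the linearized stability estimate \r{13} for the linearization $A_f$ at an \emph{arbitrary} speed $C^k$-close to $1$, with constants uniform in $f$ --- are precisely what you defer to ``essentially the content of \cite{SU-IBSP}''. That reference proves local uniqueness, not these estimates; establishing them is the actual content of the proof of Theorem~\ref{thm_e1}. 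The paper does it as follows: the exact bilinear identity \r{e5} from \cite{SU-IBSP} plus energy estimates give the remainder bound with the $(1+\lambda)$ weight (Proposition~\ref{pr_e1}); the geometric optics expansion for $v_T$ (Propositions~\ref{pr_go}, \ref{pr_ee}) shows that $A^*A$, with $A$ the operator \r{e14} built from the \emph{perturbed} speed, is an elliptic \PDO\ of order $0$ whose symbol depends continuously on $c\in C^k$; at $c=1$ one computes $A^*A=\pi^3h(|D|-2\lambda_0)$, injective and Fredholm on $L^2(B(0,\rho))$ by \cite{S-CPDE}; and then the perturbation machinery of Section~\ref{sec_2} (Propositions~\ref{pr2F}(b), \ref{pr5}, Corollary~\ref{cor1}(b)) yields \r{e20} uniformly for $c$ near $1$. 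Without this step your argument only controls the Born linearization $A_0$ at $c=1$, which at best feeds Theorem~\ref{thm1}(a) (comparison of $c$ with the constant speed), not the two-speed estimate \r{et2}; part (b) explicitly requires \r{13} for $A_{f}$ at nearby $f$ together with uniform boundedness $A_f:\B_1''\to\B_2''$, and no mechanism for that is supplied.

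Two smaller points. Your base-case stability at $c=1$ (Fourier slice identity, control of $|\xi|>2\lambda_0$ by the weighted sup norm, recovery of the band-limited low-frequency part via a prolate-spheroidal/norm-less-than-one estimate) is a correct and rather more self-contained route to what the paper gets by citing \cite{S-CPDE} for $h(|D|-2\lambda_0)$; but it is tied to the explicit phase $x\cdot\theta$ and does not perturb to variable $c$, so it cannot replace the elliptic-\PDO\ argument. On the data side you take $\B_2'$ to be a heavily weighted sup norm and must then verify both that the nonlinear data difference lies in your $\B_2''$ (decay $(1+\lambda)^{-b}$) and that $A_f:H^{\sigma_1}\to\B_2''$ uniformly --- further unproven estimates; the paper sidesteps this by taking $\B_2'=L^2\big((\lambda_0,\infty)\times S^2;\lambda^2\d\lambda\,\d\theta\big)$ (the natural space in which $A^*A$ is studied) and choosing $\B_2''=\B_2$, $\mu_2=1$, so that only the $\B_1$-side interpolation is nontrivial.
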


\begin{proof}
We start with standard geometric optics arguments, see also \cite[Prop.~3.1]{SU-IBSP}. Let $h_j(t)=t^j/j!$ for $t\ge0$, and $h(t)=0$ for $t<0$; then $h_j'=h_{j-1}$, $j\ge 0$, with the convention $h_0=h$, $h_{-1}=\delta$. 

\begin{proposition}\label{pr_go}
If $c\in C^\infty$, and $\eps\ll1$, then $\forall N$, 
\be{e6}
u= \sum_{j=-1}^{N}\alpha_j(x,\theta)h_j(t-\phi(x,\theta))  +
r_N(t,x,\theta) 
\ee
for $|x|\le\rho$, 
where $\phi$ solves the eikonal equation
\be{e7}
c^2(x)|\nabla\phi|^2=1, \quad \phi|_{x\cdot\theta< -\rho}=x\cdot\theta;
\ee
$\alpha_j$ solve the corresponding transport equations;   and $r_N\in C^{N}$. 
\end{proposition}

\begin{proof}
The construction of $\alpha_j$ is standard, we will focus our attention on $r_N$. The latter solves
\[
\left(\partial_t^2 - c^2(x)\Delta\right) r_N  = (c^2\Delta \alpha_N) h_{N}( t-\phi(x,\theta)) , \quad r_N|_{t<-\rho}=0.
\]
By standard energy estimates, $r_N\in H^{N+1}_{\rm loc}$, see also \cite{SU-IBSP}. By Sobolev embedding theorems, we get $r_N\in C^{N-1}$. To prove that $r_N\in C^{N}$, we apply this to $r_{N+1}$; then $\alpha_{N+1}h_{N+1}(t-\phi(x,\theta))+r_{N+1}\in C^N$.  
\end{proof}

Note that $\rho+\phi(x,\theta)= \mbox{dist}(P_\theta,x)$. The expansion \r{e6}  is valid in $\overline{B(0,\rho)}$ if the eikonal equation has a smooth solution there. The latter is guaranteed by the following geometrical condition. Let $\gamma_{z,\theta}(t)$ be the geodesic in the metric $c^{-2}dx^2$ issued from $z$ in the direction of $\theta$. Then we want $z\in P_\theta$, and $t$ to be coordinates in $\overline{B(0,\rho)}$. The is certainly true if $c$ is close enough to a constant in $C^2$ that we assume. The leading coefficient $\alpha_{-1}$ solves a homogeneous transport equation, and in particular, $\alpha_{-1}>0$. If $c=1$, then $\alpha_{-1}=1$, $\alpha_j=0$ for $j\ge0$, and $\phi=x\cdot\theta$.

Note that \r{e6} remains true if $c\in C^k$, $k\gg2$ but for some $N=N(k)$, and $N(k)\gg1$ if $k\gg2$. Moreover, $\{\alpha_j\}_{j=-1}^N$ depend continuously on $c\in C^k$.  Therefore, under the assumptions of the theorem, for any fixed $N$,
\be{ea}
\|\alpha_{-1}-1\|_{C^N} \le C\eps, \quad \|\alpha_{j}\|_{C^N} \le C\eps, \quad j=1\dots N, \quad \|r_N\|_{C^N}\le C\eps,
\ee
if $k\gg1$.  The $C^N$ norm of $\alpha_j$ is taken in $\overline{B(0,\rho)}\times S^2$, while the norm of $r_N$ is taken in for $(x,\theta)\in\overline{B(0,\rho)}\times S^2$, and $t$ in any fixed finite interval (under the assumption  $\eps\ll1$). Similarly,
\be{ea1}
\|\phi-x\cdot\theta\|_{C^N}\le C\eps,
\ee
see also \cite{SU-IBSP}.

Let $c$ and $\tilde c$ be two sound speeds, and denote by $\tilde S$, $\tilde u$ the scattering kernel $S$ and the solution $u$ related to $\tilde c$. 
The following formula is proven in \cite{SU-IBSP}:
\be{e5}
(\tilde S-S)(s,\omega,\theta) = \frac1{8\pi^2}\partial_s^3\iint (\tilde c^{-2}-c^{-2}) \tilde u(t,x,\theta) u(-s-t,x,-\omega)\, \d t\, \d x.
\ee
The integral above makes sense even though $u$ and $\tilde u$ are distributions.

By the finite speed of propagation  for \r{e1}, see e.g.\ \cite{Evans}, 
\be{e8a}
\supp u(t,\cdot,\theta)  \subset \left\{ x;\; \phi(x,\theta)\le t \right\}, \quad
\supp u(-s-t,\cdot,\theta)  \subset \left\{ x;\; \phi(x,\theta)\le -s-t \right\}.
\ee
Therefore, for $s\ge -T$, on the support of the integrand in \r{e8a}, we have
\[
t\le T-\phi(x,\theta), \quad -s-t \le T-\phi(x,\theta),
\]
see \r{e8a}. 
This gives an upper bound for $t$ and $-s-t$ for $|x|\le\rho$. We want that upper bound to be so that the singular front of both waves involved in \r{e3}, with $\omega=-\theta$ there, would pass through the whole ball $B(0,\rho)$. Physically, this is equivalent to the requirement that we make our measurements for an interval of time so that the  front of the wave $u(t,x,\theta)$ would have enough time to pass through the whole $B(0,\rho)$, and then return back a scattered signal. This explains the  choice of $T$: we want $T$ to be   so that $T-\phi(x,\theta)> \phi(x,\theta)$, $\forall x\in B(0,\rho)$, i.e., 
\be{e9'}
T>2\max\{\phi(x,\theta); \; |x|\le\rho, \, |\theta|=1\}.
\ee
This is equivalent to \r{e9}. 
Therefore, we study $S$ restricted to $-T\le s\le 2\rho$ (the upper bound is not a restriction actually, see \r{e5c}), and the length of that interval, assuming for a moment equality in \r{e9}, equals  twice the length of the longest  geodesic issued from some support plane $P_\theta =\{x\cdot\theta=-\rho\}$, and staying in $B(0,\rho)$. 

For $x\in B(0,\rho)$, $s\ge -T$, and $t>T+\rho$, we have that $-s-t<-\rho$, and then $u(-s-t,x,\theta)=0$, see \r{e8a}. Similarly, for $x\in B(0,\rho)$, $s\ge -T$, and $-s-t>T+\rho$, we have that $t<-\rho$, and then $\tilde u(t,x,\theta)=0$. This shows that in the integral \r{e5}, the solutions $\tilde u$ and $u$ can be replaced by the same solutions restricted to $t\le T+\rho$, and this will not change the l.h.s.\ when $s>-T$. Let $\chi$ be a smooth function so that 
\be{chi}
\mbox{$\chi(t)=1$ for $-\rho\le t\le T+\rho$, \quad $\chi\in C_0^\infty(\R)$.} 
\ee
Set
\be{e10}
u_T = \chi(t) u(t,x,\theta),
\ee
and we define $\tilde u_T$ in a similar way. Note that the subscript $T$ in $S$ and $u$ has a different meaning. The cut-off for $t<-\rho$  in \r{e10} does not change $u$ for $x\in B(0,\rho)$ because $u=0$ there. 
Then
\be{e11}
(\tilde S_T-S_T)(s,-\theta,\theta) = \frac1{8\pi^2}\partial_s^3\iint (\tilde c^{-2}-c^{-2}) \tilde u_T(t,x,\theta) u_T(-s-t,x,\theta)\, \d t\, \d x.
\ee
Take Fourier transform of both sides to get
\be{e12}
(\tilde a_T-a_T)(\lambda,-\theta,\theta)= -\frac1{4\pi}  \int \left(\tilde c^{-2}(x) -c^{-2}(x)\right ) \tilde v_T(x,\theta,\lambda)v_T(x,\theta,\lambda)\, \d x,
\ee
where 
\be{e13}
v_T(x,\theta,\lambda) = \int e^{\i \lambda t}u_T(t,x,\theta)\, \d t.
\ee

\begin{remark}
Note that if we replace $u_T$ by $u$ on the r.h.s.\ above, we get the ``distorted harmonic wave'' solution $v=e^{\i\lambda x\cdot \theta} + v_{\rm sc}$, where $v_{\rm sc}$ is outgoing, i.e., it satisfies the Sommerfeld radiation conditions. In our case, $v_T$ is a convolution of $v$ with the Fourier transform of a certain characteristic function. That convolution does not affect the high $\lambda$ asymptotic of $v$, so it is not surprising that below we get the  expansion familiar from the stationary theory. The advantage that we have here is that we do not need to estimate the contribution of large $t$'s, and instead of using estimates of the cut-off resolvent for non-trapping systems,   we can just use the geometric optics expansion \r{e6} that is more direct,  easier to justify for $c$'s of finite smoothness, and easier to perturb near a fixed $c$. Furthermore, we get an even stronger result by discarding information ($t\gg0)$ that is not needed. 
\end{remark}

Linearizing \r{e12}, we get the following.

\begin{proposition}  \label{pr_e1}
Let $c$, $\tilde c$ be two speeds satisfying the assumptions of Theorem~\ref{thm_e1}.  Then 
\be{e13a}
(\tilde a_T-a_T)(\lambda,-\theta,\theta)= \int \left(\tilde c^{-2}(x) -c^{-2}(x)\right )  v_T^2(x,\theta,\lambda) \, \d x +R(\theta,\lambda)
\ee
where 
\be{el3b}
\sup_{\theta\in S^2,\,\lambda>0}\; (1+\lambda)|R(\theta,\lambda)| \le C \big\|\tilde c^{-2}-c^{-2}\big\|^2_{L^\infty}.
\ee
\end{proposition}

\begin{proof}
The difference $\tilde u-u$ solves
\[
(\partial_t^2- c^2\Delta)(\tilde u-u) = (\tilde c^2-c^2)\Delta \tilde u, \quad (\tilde u-u)|_{t\ll0}=0.
\]
By standard energy estimates, in any compact set,
\[
\|\tilde u_t-u_t\|_{L_x^2}      +   \|\tilde u-u\|_{H^1_x}
\le C \big\|\tilde c^2-c^2\big\|_{L^\infty}.
\]
The constant $C$ depends on $\tilde c$ and $c$ and  is easy to see that it remains uniformly bounded, if $c$ is bounded in $C^1$ that follows from the hypothesis  \r{et1}. Then we get by \r{e13},
\[
(1+\lambda) \|\tilde v_T-v_T\|_{L^2}\le C \big\|\tilde c^2-c^2\big\|_{L^\infty}.
\]
Compare \r{e12} and \r{e13a} to get
\[
\sup_\lambda|(1+\lambda)R(\theta,\lambda)| \le C \big\|\tilde c^{-2}-c^{-2}\big\|^2_{L^\infty},\quad\forall\theta,
\]
where the $L^2$ norm is taken in any compact. 
\end{proof}

We focus our attention now on the linear operator
\be{e14}
Af (\lambda,\theta) = \int v_T^2(x,\theta,\lambda) f(x)\, \d x, \quad f\in L^2(B(0,\rho)).
\ee
The geometric optics expansion \r{e6} yields
\be{e15}
v_T^2(x,\theta,\lambda) = e^{\i 2\lambda\phi(x,\theta)} \big( a_{-1}(x,\theta,\lambda) +  a_0(x,\theta,\lambda)  +... a_N(x,\theta,\lambda)+ R_N(x,\theta,\lambda) \big)^2, \quad \lambda\to\infty,
\ee
where 
\[
a_j = \int e^{\i \lambda t}\alpha_j(x,\theta)\chi(t)h_j(t-\phi(x,\theta))\,\d t, \quad R_N = \int e^{\i \lambda t}\chi(t)  r_N(t,x,\theta)\,\d t.
\]
Set $\eta=2\lambda\theta$, and extend $\phi$ by homogeneity (of order $1$) w.r.t.\ to $\theta$; then $2\lambda\phi(x,\theta)= \phi(x,\eta)$.  The leading term is 
\[
a_{-1}^2 = \chi^2(\phi(x,\theta))\alpha_{-1}^2. 
\] 
We have that $\phi(x,\theta)$ is $O(\eps)$ close to $x\cdot\eta$,  see \r{ea1}.   The natural choice for the space containing $\Ran A$ is therefore $L^2(\R^n_\eta)$, that we will restrict to $|\eta|>2\lambda_0$. In terms of the original variables, this space is isomorphic to $L^2\big((\lambda_0,\infty) \times S^2;\;\lambda^2\d\lambda\d\theta\big)$. Clearly, if $c\in C^\infty$,   \r{e15} implies that the expression in the parentheses on the r.h.s.\ is a classical elliptic symbol. 
For $c\in C^k$, $k\gg1$ the symbol satisfies the symbol estimates up to a finite order $m$ only, with $m\gg1$, when $k\gg1$. To prove this, we first show that $a_j$ is a symbol of order $-j$ (of finite regularity). The properties of $r_N$ that we established do not prove that  $R_N$  satisfies the symbol estimate $|\lambda^j\partial_\lambda^j \partial_{x,\theta}^\alpha  r_N|\le C\lambda^{-N} $ but it is clear that it satisfies $| \partial_\lambda^j \partial_{x,\theta}^\alpha  r_N|\le C\lambda^{-N} $. We can replace $N$ by $N+m$ now and this argument is enough to show that $A^*A$ is a \PDO\ with symbol of finite regularity $m$. This is summarized in the following.

\begin{proposition}  \label{pr_ee}
Let $c\in C^k(\R^n)$ satisfy \r{e2}. Then
\[
v_T\big(x,\eta/|\eta|,|\eta|/2\big) = e^{\i \lambda\phi(x,\eta)} a(x,\eta) 
\]
where $\phi$  and  $a$ belong to $C^N\big(\overline{B(0,\rho)}\times S^2\big)$ with $N=N(k)\to\infty$, as $k\to\infty$; $a(x,\eta)$  is a formal elliptic symbol of order $0$ satisfying the symbol estimates with all its derivatives up to order $N$.
\end{proposition}

We now consider $A$ as the operator given by \r{e14} in the following spaces
\be{e16}
A : \B_1'  :=  L^2(B(0,\rho))  \longrightarrow B_2' := L^2\big((\lambda_0,\infty)\times S^2;\;\lambda^2\d\lambda\d\theta\big).
\ee
Then 
\be{e18}
A^*Af(x) = \int_{S^2}\int_{\lambda>\lambda_0} \int_{|x|\le\rho} e^{-\i\lambda 2(\phi(x,\theta) -\phi(y,\theta) )} \bar a(x,\theta,\lambda) a(y,\theta,\lambda) f(y) \, \d y\,\lambda^2 \, \d\lambda\, \d\theta.
\ee
By a standard argument, define $\xi=\xi(x,y,\eta)$ by $2\lambda \phi(x,\theta) -\phi(y,\theta)=(x-y)\cdot\xi$, where $\eta=2\lambda\theta$ as above. The latter equation certainly has a solution for $|x|\le\rho$ if $c$ is close enough to a constant in $C^k$, $k\gg1$, and that solution is close enough to $\xi=\eta$ in $C^l$ with $l\gg1$ when $k\gg1$. Then we make a change of variables from $\eta$ to $\xi$ to get that $A^*A$ is a \PDO\ of order $0$ in a neighborhood of $B(0,\rho)$, and it is clearly elliptic. 

To apply the results in section~\ref{sec_2}, and especially Proposition~\ref{pr2}(b), we have to show that $A^*A$ is injective. Set $M = B(0,\rho_1)$, where $\rho_1>\rho$ is close enough to $\rho$ so that the construction above remains valid on $M$. Then set $\mathcal{L}=L^2(B(0,\rho))$. Assume first that $c=1$. Then 
\[
A^*Af(x) = \int_{S^2}\int_{\lambda>\lambda_0} \int_{|x|\le\rho} e^{-\i 2\lambda (x-y)\cdot\theta} f(x)  \, \d y\,\lambda^2 \, \d\lambda\, \d\theta
= \pi^3h(|D|-2\lambda_0)f,
\]
where $f$ is extended as zero outside $B(0,\rho)$. It is easy to see that $h(|D|-2\lambda_0)$ is a Fredholm  injective operator on $L^2(B(0,\rho))$, see \cite{S-CPDE} for any $\lambda_0\ge0$ (and it is the identity of $\lambda_0=0$). Therefore, the conditions of Proposition~\ref{pr2}(b) are satisfied for $c=1$ and we get
\be{e19}
\|f\|_{L^2(B(0,\rho))}/C  \le \|A^*Af\|_{L^2(B(0,R))}  \le C\|f\|_{L^2(B(0,\rho))}.
\ee
with some $R>\rho$. This show in particular that $A :\B_1'\to \B_2'$ is bounded, depending continuously on $c\in C^k$, $k\gg1$.  That property is preserved if we increase $\rho$ a bit. Therefore, $A^*: \B_2'\to L^2(B(0,R))$ is bounded a well, and also depends continuously on $c\in C^k$, $k\gg1$. Thus
\be{e20}
\|f\|_{L^2(B(0,\rho))} \le C \|Af\|_{\B_2'} .
\ee
This is the basic estimate that we need to apply Theorem~\ref{thm1}.   The spaces $\B_1$ and $\B_2$ are determined by the estimate in Proposition~\ref{pr_e1}; set $\B_1=L^\infty(B(0,\rho))$, and let $\B_2$ be the Banach  space determined by the norm on the l.h.s.\ of \r{el3b}. We need to show that the interpolation estimates \r{int} hold with a proper choice of $\B_{1,2}''$. Since
\[
\|f\|_{\B_1} = \|f\|_{L^\infty(B(0,\rho))} \le C_\eps\|f\|_{H^{n/2+\eps}},
\] 
$\forall\eps>0$, we use \r{inter} to deduce that the second interpolation estimate in \r{int} holds with $\B_1'' = H^{s_2}(B(0,\rho)) $,     where $n/2+\eps= s_2(1-\mu_1)$. Note that we can make $\mu_1$ as close to $1$ as we want by choosing $s_2$ large enough. Next, we need to choose $\B_2''$ so that the first interpolation estimate in \r{int} holds:
\be{e21}
\|u\|_{\B_2'}\le C\|u\|_{\B_2}^{\mu_2} \|u\|^{1-\mu_2}_{\B_2''}.
\ee
In this case, we can just choose $\B_2''=\B_2$, and $\mu_2=1$ because it is easy to see that
\be{e21x}
\|u\|_{\B_2'}\le C\|u\|_{\B_2}  .
\ee
The condition $\mu_1\mu_2>1/2$ is therefore satisfied when $s_2>n+2\eps$. Then we can choose $\mu=\mu_1$ to be any number in $(0,1)$ so that $1-\mu< n/(2s_2)$. 
The compactness assumption $\|f\|_{\B_1''}\le K$, see \r{16}, is then satisfied if $\|f\|_{H^{s_2}}\le K$, which is guaranteed by \r{et1}, if $k\ge s_2$. 

This completes the proof of Theorem~\ref{thm_e1}.
\end{proof}


\end{document}